\renewcommand{\baselinestretch}{1}
 \pgfplotsset{compat = newest} 
\newtheorem{theorem}{Theorem}[section] 
\newtheorem{lemma}[theorem]{Lemma}
\newtheorem{definition}[theorem]{Definition}
\newtheorem{strategy}[theorem]{Strategy}
\newtheorem*{metarule}{Metarule}
\newtheorem*{execution}{Execution}
\newtheorem{question}[theorem]{Question}
\Crefname{claim}{Claim}{Claims}
\theoremstyle{definition}
\theoremstyle{remark}
\newtheorem{remark}[theorem]{Remark}
\DeclareMathOperator{\Poisson}{Po}
\newcommand{\Poi}[1]{\Poisson\left(#1\right)}
\DeclareMathOperator{\Binomial}{Bin}
\newcommand{\Bin}[1]{\Binomial\left(#1\right)}
\newcommand{\N}{\mathbb{N}}
\newcommand{\expp}[1]{\exp \left(#1\right)} 
\newcommand{\exnd}[1]{\expp{-\D^{#1}}}
\newcommand{\expd}[1]{\expp{\D^{#1}}}
\newcommand{\prob}[1]{\mathbb{P}\left[#1\right]} 
\newcommand{\expec}[1]{\mathbb{E}\left[#1\right]} 
\newcommand{\roundup}[1]{\left\lceil#1\right\rceil} 
\newcommand{\set}[1]{\left\{#1 \right\}} 
\newcommand{\parens}[1]{\left( #1 \right)} 
\newcommand{\ceil}[1]{\left\lceil #1 \right\rceil} 
\newcommand{\card}[1]{\left|{#1}\right|} 
\newcommand{\whp}{whp}
\newcommand{\im}{{i-1}}
\newcommand{\parent}{\operatorname{parent}}
\newcommand{\bigo}[1]{O\mathopen{}\left(#1\right)\mathclose{}}
\newcommand{\smallomega}[1]{\omega\mathopen{}\left(#1\right)\mathclose{}}
\newcommand{\failedV}[2]{ 
\ifstrequal{#2}{}
{F}
{\ifstrequal{#1}{}
{F\mathopen{}\left(#2\right)\mathclose{}}
{F^{(#1)}\mathopen{}\left(#2\right)\mathclose{}}}
}
\renewcommand{\failedV}{F} 
\newcommand{\badV}[1]{  
\ifstrequal{#1}{}
{B}
{B\mathopen{}\left(#1\right)\mathclose{}}
}
\newcommand{\usefulE}[1]{S_{#1}} 
\newcommand{\gproperty}[0]{\mathcal{P}} 
\newcommand{\embedding}[1]{  
\ifstrequal{#1}{}
{\Phi}
{\Phi_{#1}}
}
\newcommand{\D}{\Delta}
\newcommand{\replacementgadget}[3]{ 
\ifstrequal{#1}{}
{\Gamma(#2,#3)}
{\Gamma^{(#1)}(#2,#3)}
} 
\newcommand{\length}{coverage\xspace}
\newcommand{\CC}{\ensuremath{\mathcal{C}}}
\newcommand{\tauout}{\tau_\textsc{Out}}
\renewcommand{\rq}{''}
\newcommand{\poly}{\operatorname{poly}}
\title{Optimally building spanning graphs in semirandom graph processes}
\author[Anastos]{Michael Anastos$^1$}
\author[Collares]{Maurício Collares$^2$}
\author[Erde]{Joshua Erde$^3$}
\author[Kang]{Mihyun Kang$^4$}
\author[Schmid]{Dominik Schmid$^4$}
\author[Sorkin]{Gregory B. Sorkin$^5$}
\address{$^1$Institute of Science and Technology Austria (ISTA), 3400 Klosterneuburg, Austria}
\email{michael.anastos@ist.ac.at}
\address{$^2$Instituto de Matem\'atica e Estat\'istica, Universidade de
S\~ao Paulo, Rua do Mat\~ao 1010, 05508-090 S\~ao Paulo, Brazil}
\email{collares@ime.usp.br}
\address{$^3$School of Mathematics, University of Birmingham, Birmingham, B15 2TT, UK}
\email{j.erde@bham.ac.uk}
\address{$^4$Institute of Discrete Mathematics, Graz University of Technology, Steyrergasse 30, 8010 Graz, Austria}
\email{\{kang,schmid\}@math.tugraz.at}
\address{$^5$Department of Mathematics, The London School of Economics and Political Science, London,
England}
\email{g.b.sorkin@lse.ac.uk}
\begin{document}
\newcounter{assertno}[theorem]
\setcounter{assertno}{0}

\newenvironment{assert}{\refstepcounter{assertno}\begin{list}%
    {\textbf{Assertion \theassertno: }\hfill}{\setlength{\topsep}{3pt}%
      \setlength{\partopsep}{0pt}\setlength{\parsep}{3pt plus 1pt}%
      \setlength{\leftmargin}{0.01mm}\setlength{\labelwidth}{0.01mm}%
      \setlength{\labelsep}{0pt}\renewcommand{\baselinestretch}{1.5}}%
  \item}{\end{list}}

\newenvironment{reason}{\begin{list}%
    {\textit{Reason: }\hfill}{\setlength{\topsep}{3pt}%
      \setlength{\partopsep}{0pt}\setlength{\parsep}{3pt plus 1pt}%
      \setlength{\leftmargin}{0.01mm}\setlength{\labelwidth}{0.01mm}%
      \setlength{\labelsep}{0pt}\renewcommand{\baselinestretch}{1.5}}%
  \item}{\end{list}}

\begin{abstract}
The semirandom graph process constructs a graph $G$ in a series of rounds, starting with the empty graph on $n$ vertices.
In each round, a player is offered a vertex $v$ chosen uniformly at random, and chooses an edge on $v$ to add to $G$. 
The player's aim is to make $G$ satisfy some property as quickly as possible. 
Our interest is in the property that $G$ contain a given $n$-vertex graph $H$ with maximum degree~$\D$.
In 2020, Ben-Eliezer, Gishboliner, Hefetz and Krivelevich showed that there is a semirandom strategy that achieves this, 
with probability tending to one as $n$ tends to infinity, in $\parens{1 + o_\D(1)}\frac{3 \D n}{2}$ rounds, where $o_\D(1)$ is a function that tends to $0$ as $\D$ tends to infinity. 
We improve this to $\parens{1 + o_\D(1)}\frac{ \D n}{2}$, which can be seen to be asymptotically optimal in $\D$. 
We show the same result for a variant of the semirandom graph process, namely the semirandom tree process introduced by Burova and Lichev, where in each round the player is offered the edge set of a uniformly chosen tree on $n$ vertices, and chooses one edge to keep.
\end{abstract}
\maketitle

\section{Introduction}\label{sec:introduction}
The \emph{random graph process} was introduced by Erd\H{o}s and R\'{e}nyi in their groundbreaking papers on random graphs \cite{ER59,ER60}. In this process, starting with the empty graph with vertex set $[n] \coloneqq \set{1,\ldots,n}$, a sequence of edges $e_1,e_2,\ldots, e_{\binom{n}{2}}$ is chosen, where in the $t$th step the edge $e_t$ is chosen uniformly at random from the set $\binom{[n]}{2} \setminus \{e_1,\ldots, e_{t-1}\}$
of edges which have not already been chosen. One of the motivations for studying this process is that it gives a natural coupling for the set of \emph{uniform random graphs} $G(n,m)$, where $G(n,m)$ is a graph chosen uniformly at random from all graphs with vertex set $[n]$ and with $m$ edges. Indeed, it is easy to see that for each $0 \leq m \leq \binom{n}{2}$ the graph $G_m \colon = ([n], \{e_1,\ldots, e_m\})$ has the same distribution as $G(n,m)$, and hence the random graph process leads to a natural coupling  of the random graphs $G(n,0),G(n,1), \ldots, G\left(n, \binom{n}{2}\right)$ such that
\[
G(n,0) \subseteq G(n,1) \subseteq \ldots \subseteq G\big(n, {\textstyle \binom{n}{2}}\big).
\]
This allows us to think of the structure of the random graph $G(n,m)$ as \emph{evolving} as we increase the parameter $m$. The random graph process has been extensively studied, and there has been much research into how this process is affected by restricting or changing the distribution of the edges chosen in each step (see monographs on random graphs~\cite{Bollobas2001,FrKa2016,JaLuRu00}).

For example, given some decreasing graph property $\mathcal{P}$, we could restrict our choice of edges by only allowing an edge $e_t$ in the $t$th step if the resulting graph $G_t$ has the property $\mathcal{P}$ (see e.g., \cite{ESW95}). In this way, the process would stop not with a complete graph, but with a maximal graph with the property $\mathcal{P}$. This gives a simple and explicit way to sample a random extremal graph with property $\mathcal{P}$, although the distribution on the set of such graphs may not be uniform. These models have been useful for the study of $H$-free graphs \cite{BK10,BR00,OT01}, which has important applications in Ramsey theory \cite{B09Triangle,BK10,K95}, as well as in the study of random planar graphs \cite{GSST08,KaMi2023} and in other contexts \cite{BFL15,KKMS20,KKLS18,KSV09,KOT16}. 

Other results focus on how the process is affected if we introduce some bias, either random or via some strategic agency, to the choice of the edge in each step. In particular, perhaps motivated by a paradigm in computer science known as the \emph{power of two random choices} \cite{AzBrKaUp1999,MRS01}, Achlioptas suggested a variation of the random graph process, now known as the \emph{Achlioptas process} (see \cite{BoFr2001}). Starting with the empty graph on $n$ vertices, in each step \emph{two} (or several) edges are sampled uniformly at random and one of those is selected according to some rule and added to the graph. We are then interested in how the choice of this rule can affect the evolution of the random graph process. Particular focus has been given to how it can affect the \emph{phase transition} this model undergoes, where the size of the largest component transitions from logarithmic to linear, specifically, how this phase transition can be delayed or accelerated \cite{BoFr2001,BoKr2006,FlGaSo2005,JaSp2012,SpWo2007} and how ``smoothly'' it occurs \cite{AcDsSp2009,B09Connec,RiWa2012,RiWa2011}.

Similar questions about the effect of strategic choice on random processes were considered by Ben-Eliezer, Hefetz, Kronenberg, Parczyk, Shikhelman and Stojakovi\'c \cite{BeHeKrPaShSt2020}, who studied the so-called \emph{semirandom graph process} suggested initially by Michaeli.
Starting with the empty graph $G_0$ on the vertex set $[n]$, in each round $t \in \N$ a player
we refer to as ``Builder''
is \emph{offered} a vertex $v_t$ 
chosen from $[n]$ uniformly at random and independently of all previous rounds. Builder then \emph{chooses} an edge $e_t$ incident to $v_t$ and adds it to the graph $G_{t-1}$, i.e., $E(G_t) := E(G_{t-1}) \cup \{e_t\}$. Equivalently, Builder is offered the edge set of a random \emph{star} on vertex set $[n]$ centred at $v_t$, and chooses an edge $e_t$ from this star. Builder's goal is to build a graph satisfying a given graph property $\gproperty$ (typically monotone increasing) as quickly as possible. For convenience, we also allow Builder to claim no edge in a round, which clearly does not give Builder any strategic advantage. 
Although recent, the semirandom graph process has received extensive attention,
with results on graph properties including containment of a perfect matching~\cite{BeHeKrPaShSt2020,GaMaPr2022PM}, Hamiltonicity~\cite{FGMPS23,FrSo2022, GaKaMaPr2022, GaMaPr2022HC}, containment of bounded-size subgraphs~\cite{BeMarPrRu2024,BeHeKrPaShSt2020}, and the chromatic and independence numbers~\cite{GaKaPr2024}.

\smallskip

Before proceeding, let us mention that we write ``whp'' to mean ``with high probability\rq, that is, with probability tending to $1$ as $n$ tends to infinity. 
We use the usual Landau notation, such as $\omega(\ln n)$, 
also with the assumption that $n \to \infty$. 
By contrast, we write $o_\D(1)$ to denote an arbitrary function 
that approaches 0 as $\D \rightarrow \infty$.

\smallskip

In this paper we concern ourselves with the property of constructing a copy of a given $n$-vertex graph $H$ of maximum degree $\D$. Alon \cite[Question 6.2]{BeHeKrPaShSt2020} asked whether every graph of bounded maximum degree can be constructed in $O(n)$ rounds. This was resolved affirmatively by Ben-Eliezer, Gishboliner, Hefetz and Krivelevich~\cite{BeGiHeKr2020},
who showed the following. 
\begin{theorem}[{\cite[Theorem 1.3]{BeGiHeKr2020}}]\label{thm:Ben_Eliezer_result}
Let $H$ be an $n$-vertex graph with maximum degree $\D = \D(n)$. Then, in the semirandom graph process, Builder has a strategy guaranteeing that \whp{},
after
    \[\begin{array}{c@{\quad}l}
       \parens{1 + o_\D(1)}\frac{\D n}{2} \text{ rounds} &\text{if } \D = \smallomega{\ln n}, and
       \\[+1ex]
       \parens{1 + o_\D(1)}\frac{3 \D n}{2} \text{ rounds} & \text{otherwise,} 
    \end{array}\]
    the constructed graph will contain a copy of $H$.
\end{theorem}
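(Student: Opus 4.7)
The plan is for Builder to (i) randomly pre-embed $V(H)$ into $[n]$ via a uniformly chosen bijection $\phi\colon V(H)\to[n]$, (ii) orient each edge of $H$ independently and uniformly at random (so each vertex has expected out-degree $\le\Delta/2$), and (iii) greedily claim, whenever a vertex $x$ is offered, an unclaimed ``out-edge'' at $\phi^{-1}(x)$, i.e.\ an edge $\{x,\phi(v)\}$ with $(\phi^{-1}(x),v)$ a still-uncovered out-edge of the oriented $\vec H$. Writing $d^+(x)$ for the out-degree of $\phi^{-1}(x)$ in $\vec H$ and $X_x$ for the number of times $x$ is offered in $T$ rounds, this strategy succeeds provided $X_x\ge d^+(x)$ for every $x\in[n]$.

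For the easy regime $\Delta=\omega(\ln n)$, I would run $T=(1+\epsilon)\tfrac{\Delta n}{2}$ rounds with $\epsilon=\epsilon(\Delta)\to 0$ chosen so that $\epsilon^2\Delta\gg \ln n$. Then $X_x\sim\mathrm{Bin}(T,1/n)$ has mean $(1+\epsilon)\Delta/2$, and $d^+(x)$ is stochastically dominated by $\mathrm{Bin}(\Delta,1/2)$ with mean at most $\Delta/2$. Two Chernoff bounds, combined by a union bound over the $n$ vertices, yield $X_x\ge d^+(x)$ for every $x$ whp, giving a copy of $H$ in $(1+o_\Delta(1))\tfrac{\Delta n}{2}$ rounds.

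For general $\Delta$, Chernoff is too weak for a naive union bound; for small $\Delta$ a nontrivial fraction of vertices is offered fewer than $\Delta$ times even in $\Theta(\Delta n)$ rounds. Here I would run $T'=(1+\epsilon)\tfrac{3\Delta n}{2}$ rounds and abandon the fixed-orientation rule in favour of an \emph{adaptive} version: the orientation of each edge is decided online, always steering a pending edge towards whichever of its two endpoints has more remaining offer budget. The $3/2$ factor provides the slack: each vertex expects $\approx 3\Delta/2$ offers, comfortably more than the $\Delta$ edges it might need to host. Concretely, I would phrase success as a bipartite matching between pending $H$-edges and remaining offers (an offer of $x$ being incident to all edges of $H$ at $\phi^{-1}(x)$), and verify Hall's condition via a first-moment estimate ruling out offer-starved edge sets.

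The main obstacle is the small-$\Delta$ case, in particular the Hall verification. When $\Delta$ is constant, there are $\Theta(ne^{-3\Delta/2})$ vertices that receive fewer than $\Delta$ offers, so one cannot demand that every $H$-vertex absorbs all its own edges. The heart of the argument is to show that these ``deficient'' vertices are sufficiently spread out in the random embedding $\phi$ that each of them has enough offer-rich $H$-neighbours to claim its incident edges on its behalf. Once this local-rerouting/Hall step is established, by a union bound over deficient subsets of controlled size, the theorem follows in the claimed number of rounds; the large-$\Delta$ case is essentially a careful application of Chernoff's inequality.
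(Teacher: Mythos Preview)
This theorem is quoted from \cite{BeGiHeKr2020}; the present paper does not give a full proof of it, though it reproves the first case in \Cref{rem:largeD} and sketches the cited paper's approach to the second case in the introduction.

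Your treatment of the case $\Delta=\omega(\ln n)$ is essentially correct and matches the paper's argument (\Cref{rem:largeD}), with the minor difference that you orient edges at random while the paper uses the deterministic balanced orientation of \Cref{lem:graph_orientation}. Either works.

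For the second case, however, there is a genuine gap. Your strategy keeps the embedding $\phi$ fixed throughout and hopes to cover every $H$-edge using offers of one of its two endpoints (this is exactly what your bipartite matching formulation encodes: an offer of $x$ is adjacent only to the $H$-edges at $\phi^{-1}(x)$). But in $T=(1+\epsilon)\tfrac{3\Delta n}{2}$ rounds, a given vertex receives zero offers with probability roughly $e^{-3\Delta/2}$, and for a fixed $H$-edge $\{u,v\}$ both $\phi(u)$ and $\phi(v)$ receive zero offers with probability roughly $e^{-3\Delta}$; randomising $\phi$ does not change this. With up to $\Delta n/2$ edges, the expected number of $H$-edges with \emph{no} incident offer is $\Theta(\Delta n\,e^{-3\Delta})$, which for constant $\Delta$ is $\Theta(n)$. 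Such edges are isolated in your bipartite graph, so Hall's condition fails outright, and no orientation rule (adaptive or otherwise) can claim them. This is precisely the obstruction recorded in \Cref{greedyLimit}: any strategy that commits to a fixed embedding and only claims $H$-edges at the offered vertex leaves $\Theta(n)$ edges unclaimed when $\Delta=O(1)$.

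The missing idea, which your proposal does not contain, is to \emph{change the embedding}: after a greedy phase leaves a small set of failed vertices, one spends the remaining $\approx\Delta n$ rounds building, for each failed vertex $v$, a ``swap gadget'' $R(v,v')$ to some candidate $v'$, and then relabels $v\leftrightarrow v'$. This is the mechanism from \cite{BeGiHeKr2020} described in the introduction and in \Cref{sec:substitutions}, and it is what supplies the extra $\Delta n$ in the $\tfrac{3\Delta n}{2}$ bound.
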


Because $H$ may contain $\frac{\D n}{2}$ edges, Builder needs at least that many rounds.
Thus, \Cref{thm:Ben_Eliezer_result} is asymptotically optimal for $\D = \smallomega{\ln n}$.
On the other hand, for smaller $\D$, 
it is a factor of 3 away from the trivial lower bound. 

Ben-Eliezer, Gishboliner, Hefetz and Krivelevich asked if the first bound in \Cref{thm:Ben_Eliezer_result} can be shown to hold for all $\D$~\cite[Problem 1.4]{BeGiHeKr2020}.
For constant $\D$, \Cref{thm:Ben_Eliezer_result} implies that $O(n)$ rounds suffice, and to say more would mean characterising the leading constant.
(As noted earlier, there is ongoing exploration of the constants for $\D=1$, namely for perfect matchings, and for $\D=2$, namely for Hamilton cycles.)
So, retaining the factor $1+o_\D(1)$, the question posed is really about the behaviour for $\D=\smallomega 1$.

Our main result answers this question affirmatively: Builder can build any $n$-vertex graph with maximum degree $\D$ in time asymptotically matching the trivial lower bound.
\begin{theorem}\label{thm:main:star}
Let $H$ be an $n$-vertex graph with maximum degree $\D = \D(n)$. Then, in the semirandom graph process, Builder has a strategy guaranteeing that whp after
\begin{align}\label{eq:thm:main:star}
    \parens{1+ o_\D(1)}\frac{\D n}{2}
\end{align}
rounds, the constructed graph will contain a copy of $H$. 
Furthermore, the same statement holds after $\Poi{\parens{1+ o_\D(1)}\frac{\D n}{2}}$ rounds.
\end{theorem}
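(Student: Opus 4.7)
My plan is to use a bipartite matching argument, combined with Poissonisation, to show that Builder can productively claim an edge of a target copy of $H$ in essentially every round. Sample a uniformly random bijection $\phi \colon V(H) \to [n]$, so Builder's task becomes building $\phi(H)$ in $G$. Let $X_v$ denote the number of rounds in which vertex $v \in [n]$ is offered. Passing to the Poisson version of the theorem (which is equivalent to the deterministic version up to concentration of $\Poi{T}$ around $T = (1+o_\Delta(1))\Delta n/2$), the variables $X_v$ become independent $\Poi{T/n}$ with $T/n = (1+o_\Delta(1))\Delta/2$.

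The key idea is to assign each edge $e = \phi(u)\phi(w)$ of $\phi(H)$ to one of its endpoints, subject to the capacity constraint that each $v \in [n]$ receives at most $X_v$ assignments. Builder then claims these edges whenever the assigned endpoint is offered. By the capacitated Hall marriage theorem, such an assignment exists if and only if
\begin{align*}
\sum_{u \in S} X_{\phi(u)} \geq |E(H[S])| \qquad \text{for every } S \subseteq V(H).
\end{align*}
Verifying this Hall-type condition whp is the main obstacle. Since $|E(H[S])| \leq \Delta|S|/2$ and $\expec{\sum_{u \in S} X_{\phi(u)}} = (1+o_\Delta(1))\Delta|S|/2$, the slack is only $o_\Delta(1)\cdot \Delta|S|/2$. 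For large $|S|$, Chernoff concentration delivers this slack with room to spare. For small $|S|$, concentration is weak, but the extremal bound $|E(H[S])| \leq \Delta|S|/2$ is far from tight unless $H[S]$ is close to $\Delta$-regular; the randomness of $\phi$, together with a dyadic union bound over size ranges of $S$, handles the remaining ``tight'' subsets (in particular, sets $S$ supporting nearly-regular induced subgraphs of $H$).

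Finally, I would convert the offline existence of such an assignment into an online strategy: at each round $t$ with offered vertex $v_t$, Builder claims an unclaimed edge at $v_t$ using a greedy rule with augmenting-path tiebreakers, preserving the invariant that a valid Hall-type assignment exists for the residual problem. Any edges left unclaimed at the end of this main phase are handled in a short clean-up phase of $o_\Delta(\Delta n)$ additional rounds, applying \Cref{thm:Ben_Eliezer_result} to the small residual graph of bounded edge count.
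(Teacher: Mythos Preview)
Your approach has a genuine gap at the Hall-condition step, and the randomness of $\phi$ does not rescue it. Consider $H$ a $K_{\D+1}$-factor and take $S$ to be (the $\phi$-image of) a single clique. Then $|S|=\D+1$ and $|E(H[S])|=\binom{\D+1}{2}$, so the Hall condition asks that a $\Poi{(1+\varepsilon)\binom{\D+1}{2}}$ random variable (with $\varepsilon=o_\D(1)$) be at least $\binom{\D+1}{2}$; by Chernoff this fails with probability $\exp(-\Theta(\varepsilon^2\D^2))$. There are $n/(\D+1)$ cliques, and since they are vertex-disjoint these events are \emph{independent} in the Poissonised model. Hence whenever $\varepsilon^2\D^2=o(\log n)$ --- in particular for any $\D=o(\sqrt{\log n})$ --- whp \emph{some} clique violates Hall, regardless of $\phi$. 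The point is that once you Poissonise, the $X_v$ are i.i.d., so a random bijection changes nothing about the distribution of $\sum_{u\in S}X_{\phi(u)}$; there is no extra randomness to exploit. This is precisely the obstruction the paper isolates in \Cref{greedyLimit}: any strategy that fixes an embedding and only claims edges of that fixed copy cannot push the failure fraction below $\exp(-\Theta(\D))$ in $O(\D n)$ rounds.

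Your clean-up phase does not close this gap either. After the Hall/greedy step one is left with up to $\exp(-\D^{1-o(1)})n$ failed vertices, and with a fixed embedding the only way to repair a failed edge $uv$ is to be offered $u$ or $v$; for $\D=o(\log n)$ this alone costs $\omega(\D n)$ rounds (e.g.\ a vertex all of whose $\D$ edges failed needs $\Theta(\D n)$ rounds on its own, or $\Theta(n\log\D)$ via its neighbours, and there can be many such vertices). Invoking \Cref{thm:Ben_Eliezer_result} on the residual does not help: that theorem is about $n$-vertex graphs and costs $\Theta(\D n)$ rounds. The paper's solution is fundamentally different: it abandons the fixed embedding and \emph{relabels} failed vertices by swapping them with carefully prepared candidates (the gadgets $R(v,v')$), using a delicate two-stage ``bridging'' phase to drive the failure count from $\exp(-\D^{0.95})n$ down past the $\exp(-\D)$ barrier to $\exp(-\D^{1.04})n$, after which a final swapping phase finishes in $o_\D(1)\cdot\D n$ rounds.
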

As we will show (\Cref{PoEnough}), the two statements in \Cref{thm:main:star} are equivalent.

As with \Cref{thm:Ben_Eliezer_result}, explicit quantitative bounds for fixed $\D$ can be extracted from our proofs, but they will be far from optimal.
As in \cite{BeGiHeKr2020}, without loss of generality we will assume in our proofs that $\D$ is at least a sufficiently large constant.

\bigskip

Let us explain the main ideas allowing us to improve upon the bounds given in \Cref{thm:Ben_Eliezer_result}.
Here, as for many properties in semirandom processes, it is easy to construct an ``almost solution'' efficiently, and the hard part lies in ``completing'' this solution.
Indeed, if we arbitrarily identify the vertex set of the target graph $H$ with the vertex set $[n]$ of Builder's graph, then in the first $(1+o_\D(1)) \frac{\D n}{2}$ rounds,
Builder can greedily obtain nearly all the required edges. 
Calling a vertex incident to any missing edge \emph{failed}, \Cref{lem:greedy_analysis} shows that with high probability, at most $\exnd{0.95} n$ vertices fail.
(And \Cref{greedyLimit} shows that no greedy strategy can increase that $0.95$ exponent above~1, 
decreasing this ``subexponential'' fraction to ``superexponential''.)

From this point on, we aim to \emph{repair} failed vertices.
More precisely, if $v$ is a failed vertex and there exists a vertex $v'$ such that $v'$ is adjacent to all vertices in $N_H(v)$ and $v$ is adjacent to all vertices in $N_H(v')$, then swapping the labels of $v$ and $v'$ reduces the number of failed vertices.
In the time available, the probability of constructing any particular such substitution is constant (bounded away from~1) and thus to go beyond what can be done greedily requires a large set of \emph{swapping candidates} $C(v)$ dedicated to a single failed vertex~$v$. 
Intuitively (but read on), in the time remaining we cannot expect a vertex in $N_H(v)$ to be offered enough times to form edges to every swapping candidate $v'$,
so instead it is natural to use offers of $v'$ to form the edges to $N_H(v)$. 
For there to be a reasonable chance that $v'$ is offered $|N_H(v)|$ times requires spending $\D n$ rounds. 
At a high level, this is the strategy employed in \Cref{thm:Ben_Eliezer_result},
and the source of its extraneous additive $\D n$.

To match the lower bound, after the greedy phase Builder can only afford to play for another $\D n \cdot o_\D(1)$ rounds.
In this case, a swapping candidate will succeed with low probability, so 
the strategy above will work only if we can
dedicate superexponentially many swapping candidates to each failed vertex. 
This is possible if 
there are fewer than, say, $\exnd{1.04} n$ failed vertices,
but as said, \Cref{greedyLimit} shows that no greedy algorithm can produce that outcome.

Enabled by a technical improvement in the greedy phase (see \Cref{lem:extragreedy_analysis}), our main contribution is a phase (see \Cref{sec:bridging_phase}) that uses $\D n \cdot o_\D(1)$ rounds and repairs all but a superexponentially small number of failed vertices
(leaving at most $\exnd{1.04} n$ failed vertices, per \Cref{lem:bridging_analysis}).
We call it a \emph{bridging phase}, since it bridges the gap between the greedy phase and a regime where the swapping strategy above works in $\D n \cdot o_\D(1)$ rounds.
To put it another way, with the number of failed vertices bounded by $\exnd{c} n$,
it brings the exponent from  $c<1$ to $c>1$.

Our bridging phase consists of two subphases. 
In the first subphase --- contrary to the ``intuitively'' above --- for each failed vertex $v$, 
we claim edges $wv'$ between neighbours $w \in N_H(v)$ and candidates $v' \in C(v)$,
in rounds in which $w$ is offered, rather than $v'$.
This works because here we ensure (by excluding a tiny number of failed vertices, ``hubs'' and hub neighbours, per \Cref{def_hub}) that not too many failed vertices have $w$ as a neighbour,
and we deliberately choose $C(v)$ to be small, 
with the result that $w$ does not have too many required edges,
so it is likely to be offered enough times to claim all of them.
Thus, \emph{almost every} neighbour $w \in N_H(v)$ of $v$ will be joined to every $v' \in C(v)$ after this first subphase.

In the second subphase, we return to the original idea of claiming such edges $wv'$ in rounds in which $v'$ is offered. 
The key difference is that now a typical $v' \in C(v)$ is already connected to almost every $w \in N_H(v)$ by the first subphase, so it is likely that in the second subphase $v'$ is offered enough times to claim the remaining edges.
It is then very likely that \emph{some} $v' \in C(v)$ can successfully be swapped with~$v$.
(Edges between $v$ and $N_H(v')$ are claimed in rounds where the corresponding vertex in $N_H(v')$ is offered, and are not problematic.)
Only a superexponentially small number of failures will remain after this subphase
($\exnd{1.04} n$ failed vertices, per \Cref{lem:bridging_analysis}), and 
the original swapping strategy is then efficient enough to finish the construction.

\medskip

This result was discovered while studying the same problem for a modified building process, the semirandom \textbf{tree} process introduced by Burova and Lichev in 2023 \cite{BuLi2023}.
In this process, in the $t$th round Builder is offered the edge set of a spanning tree $T_t$ chosen uniformly at random from all spanning trees on the vertex set $[n]$ and independently of the previous rounds.
To maintain a clear distinction, we will refer to the usual semirandom graph process as the semirandom \textbf{star} process for the rest of this paper.
Burova and Lichev \cite{BuLi2023} showed, amongst other results, that in the semirandom tree process, perfect matchings and Hamiltonian cycles can be built in time asymptotically equal to their size.
(By contrast, strictly larger time is required in the semirandom star model.)
However, for clique factors, they could not establish even that time $O(n)$ suffices, remarking that they were unable to extend the methods of~\cite{BeGiHeKr2020} and asking if the analogue of \Cref{thm:main:star} might fail to hold in this model \cite[Question 1.5]{BuLi2023}.
We obtain the following theorem, analogous to \Cref{thm:main:star}, in particular answering the question of Burova and Lichev in the negative.

\begin{theorem}\label{thm:main:tree}
Let $H$ be an $n$-vertex graph with maximum degree $\D = \D(n)$. Then, in the semirandom \textbf{tree} process, Builder has a strategy guaranteeing that whp after
  \begin{align}
  \parens{1+ o_\D(1)}\frac{\D n}{2}
  \end{align}
  rounds, the constructed graph will contain a copy of $H$. Furthermore, the same statement holds after $\Poi{\parens{1+ o_\D(1)}\frac{\D n}{2}}$ rounds.
\end{theorem}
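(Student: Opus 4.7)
The plan is to mirror the three-phase strategy (greedy, bridging, swapping) underlying \Cref{thm:main:star}. The bridge between the two models is the elementary fact that, for any fixed edge $e = \{u,v\}$, $\prob{e \in T_t}$ equals $\tfrac{2}{n}$ by symmetry (the random tree has $n-1$ of the $\binom{n}{2}$ possible edges, uniformly over edges), matching exactly the probability that $e$ is claimable in a star round (namely, that one of $u, v$ is the offered vertex). Offers across rounds are independent in both models, so the \emph{marginal} rate at which Builder can acquire any particular edge is the same in the two models.

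Fix an arbitrary bijection identifying $V(H)$ with $[n]$. In the adapted \emph{greedy phase}, in each round $t$ Builder inspects $T_t \cap E(H)$ intersected with the set of still-unclaimed edges, and claims one such edge if the intersection is non-empty, doing nothing otherwise. Using concentration for linear statistics of uniform spanning trees (the number of $H$-edges in $T_t$ is tightly concentrated around $\tfrac{2|E(H)|}{n} \leq \D$), a martingale-type argument analogous to \Cref{lem:greedy_analysis} shows that after $(1+o_\D(1))\tfrac{\D n}{2}$ rounds, at most $\exnd{0.95} n$ vertices are failed whp; the analysis parallels the star case with the indicator \lq $v_t$ is offered and has an unclaimed $H$-edge\rq\ replaced by \lq $T_t$ contains an unclaimed $H$-edge\rq.

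In the \emph{bridging phase}, for each failed vertex $v$ Builder assigns a set $C(v)$ of swap candidates and attempts to claim every edge $wv'$ with $w \in N_H(v)$ and $v' \in C(v)$: whenever such an edge appears in an offered tree $T_t$, Builder claims it (subject to a priority rule that breaks ties between several desired edges appearing in the same round). In the allotted $o_\D(1) \cdot \D n$ rounds each specific such edge is offered $2\D \cdot o_\D(1)$ times in expectation, and because $|C(v)|$ can be taken superexponentially large, enough candidates succeed to reduce the failed-vertex set to at most $\exnd{1.04} n$ whp, giving a tree-analogue of \Cref{lem:bridging_analysis}. The \emph{swapping phase} is handled analogously, and de-Poissonization via \Cref{PoEnough} yields both the Poisson and fixed-round bounds simultaneously.

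The main obstacle is the \emph{within-round} dependence of tree edges: in the star model the edges offered in a given round form a star centred at a uniform vertex, whereas in the tree model they form an entire random spanning tree, introducing negative correlations among \emph{all} pairs of edges and requiring Builder to forfeit some candidate edges whenever several desired edges appear together. Controlling these correlations --- bounding $\prob{e, e' \in T_t}$ for pairs of edges, concentrating the number of $T_t$-edges lying in a fixed subgraph, and showing that the conflicts lost to greedy tie-breaking are negligible --- requires the negative-correlation inequalities for uniform spanning trees (Feder--Mihail). Fortunately the correlations are weak: for disjoint $e, e'$ one has $\prob{e, e' \in T_t} = (1+\smallo{1})(2/n)^2$, which suffices to port the concentration arguments of \Cref{thm:main:star} to the tree setting, after which the analysis of each phase proceeds essentially as before.
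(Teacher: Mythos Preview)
Your bridging phase contains a real gap. You write that $|C(v)|$ ``can be taken superexponentially large'' at that stage, but this is precisely what is \emph{not} available: after the greedy phase there are still $\exnd{0.95}n$ failed vertices, so the coverage per failed vertex is at most of order $\expd{0.95}$, far short of the $\D^{\Theta(\D)}$ one would need for the naive swap strategy to succeed in $o_\D(\D)\cdot n$ further rounds (see the calculation around \eqref{eq:repair_prob_approx2}). The whole purpose of the bridging phase is to circumvent exactly this obstruction, and the mechanism is not the one you sketch. In the paper --- for both the star and the tree process --- the crucial ingredient is to first bound the number of \emph{hubs} (vertices with at least $\D^{0.1}$ failed neighbours) by $\exnd{1+\varepsilon}n$, and then exploit this to partition the non-hub failed vertices into only $\D^{0.1}$ independent batches rather than $\D+1$. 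Each batch then receives $\Th{\D^{0.8}}\cdot n$ rounds, so that with a deliberately \emph{small} coverage $s=\D^{0.6}$ every individual candidate already succeeds with probability close to~$1$. Your outline never mentions hubs, so you have no route to few batches; and with $\D+1$ batches the per-batch time drops to $o_\D(1)\cdot n$, at which point no admissible choice of $|C(v)|$ rescues the argument.

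A secondary point: the appeal to Feder--Mihail negative correlation and the assertion that ``conflicts lost to greedy tie-breaking are negligible'' would need real work to make rigorous, and the paper sidesteps it entirely. It uses instead the elementary observation (\Cref{lem:failurebound:strat2}) that whenever the relevant edge set $S$ satisfies $|S|\le n/4$, each $e\in S$ is the \emph{unique} $S$-edge of the offered tree with probability at least $1/n$; accepting only in that event yields genuinely independent Poisson claims across the edges of $S$, with no tie-breaking analysis required. This device also drives the paper's greedy analysis for trees, which is structured quite differently from the star case (three subphases based on \Cref{lem:failurebound:strat1,lem:failurebound:strat2}, including establishing the hub bound) rather than the Poissonised per-vertex argument of \Cref{lem:greedy_analysis} that you invoke.
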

As with \Cref{thm:main:star}, the two statements in \Cref{thm:main:tree} are equivalent (see \Cref{PoEnough}).

The rest of this paper is structured as follows. In \Cref{sec:Preliminaries} we introduce all necessary notation and probabilistic tools. In \Cref{sec:greedy,sec:substitutions,sec:bridging_phase,sec:final_phase} we describe Builder's strategy in detail. In \Cref{sec:proof_main} we analyse the strategy to prove  \Cref{thm:main:star}. In  \Cref{sec:proof_main_tree} we detail the adaptations necessary to show \Cref{thm:main:tree} using a similar strategy. Finally, in \Cref{sec:Discussion} we discuss some open questions and avenues for future research.

\section{Preliminaries}\label{sec:Preliminaries}
In this section we state the graph theoretical and probabilistic results that we will use in our argument. 

\subsection{Hajnal-Szemer\'edi Theorem}

We will make use of the Hajnal-Szemer\'edi Theorem on equitable colourings ({\cite[Theorem 1]{HaSz1970}}). 
\begin{theorem}\label{thm:Hajnal-Szemeredi}
    Let $H$ be a graph with maximum degree $\D$. For every $i \in \N$ and every $k \geq \D^i +1$, there exists a $k$-partition of $V(H)$ such that 
    \begin{enumerate}[(i)]
        \item vertices in the same part are at pairwise distance at least $i+1$
        \item the part of largest cardinality contains at  most one more vertex than the part of smallest cardinality. 
    \end{enumerate}
\end{theorem}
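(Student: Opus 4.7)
The plan is to reduce this to the classical Hajnal-Szemerédi theorem, which (in the formulation the reference \cite{HaSz1970} gives) asserts the case $i=1$: any graph $G$ with maximum degree $d$ admits an equitable $k$-partition for every $k \geq d+1$, where ``equitable'' means condition (ii). Since the statement here is labelled as Hajnal-Szemerédi and attributed to the same source, it is clearly meant to be a direct corollary obtained by passing to a suitable power of $H$.

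Concretely, I would define the \emph{$i$-th power} $H^i$ of $H$ as the graph on vertex set $V(H)$ in which $uv$ is an edge whenever $1 \leq d_H(u,v) \leq i$. The condition (i) that the members of a part be at pairwise distance at least $i+1$ in $H$ is then exactly the condition that each part is an independent set of $H^i$, i.e.\ that the partition corresponds to a proper colouring of $H^i$. So it is enough to produce an equitable $k$-partition of $H^i$ into independent sets for every $k \geq \D^i + 1$, which by the classical Hajnal-Szemerédi theorem will follow once we bound $\Delta(H^i) \leq \D^i$.

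For the degree bound, the neighbours of a fixed vertex $v$ in $H^i$ are exactly the vertices at distance $1,2,\ldots,i$ from $v$ in $H$, and a standard BFS count gives
\[
\deg_{H^i}(v) \leq \sum_{j=1}^{i} \D(\D-1)^{j-1}.
\]
A short induction on $i$ (using $(\D-1)^{i-1} \leq \D^{i-1}$) shows that this sum is at most $\D^i$. Applying the classical Hajnal-Szemerédi theorem to $H^i$ with parameter $k \geq \D^i + 1 \geq \Delta(H^i) + 1$ then yields an equitable partition of $V(H) = V(H^i)$ into $k$ independent sets of $H^i$, which is precisely a $k$-partition satisfying (i) and (ii).

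There is essentially no serious obstacle: the only thing to be careful about is the degree calculation in $H^i$, and in particular to check the inequality $\sum_{j=1}^i \D(\D-1)^{j-1} \leq \D^i$ for all $\D \geq 2$ and $i \geq 1$ so as to legitimately invoke the classical theorem with $\D^i + 1$ colours rather than a slightly larger number.
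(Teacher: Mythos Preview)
Your proposal is correct and follows exactly the route the paper takes: the paper simply remarks that the general case follows from the classical $i=1$ statement by applying it to the auxiliary graph $H^i$ obtained by joining all pairs of vertices at distance at most $i$ in $H$. Your write-up is in fact more detailed than the paper's, which does not spell out the degree bound $\Delta(H^i)\le \D^i$; your verification of that bound (cleanest via the recursion $S_i=\D+(\D-1)S_{i-1}$, giving $S_i\le \D^i$ by induction) is fine.
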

Note that Hajnal and Szemer\'edi originally stated this theorem only for the case $i=1$.  However, for larger $i$ the result is implied by considering the auxiliary graph $H^i \supseteq H$ that is obtained by joining all vertices of $H$ that are at pairwise distance at most $i$ in $H$.
 
\subsection{Chernoff bounds}
We use the following Chernoff bound for Poisson random variables (see, e.g.,~\cite[Theorem 5.4]{MiUp17}) and binomial random variables (see, e.g.,~\cite[Theorem 2.1]{JaLuRu00}).
\begin{lemma}\label{lem:chernoff}
  Let $X$ be a random variable with mean $\mu > 0$ and either Poisson or binomial distribution. Then, for $t \geq 0$,
  \[
    \prob{X \geq \mu + t} \leq e^{-\mu} \left(\frac{e\mu}{\mu+t}\right)^{\mu+t} 
    \leq \left(\frac{e \mu}{t}\right)^t ,
  \]
  the weaker last form a convenience here.
  Moreover, for $0 \leq t \leq \mu$, it also holds that
  \[
    \prob{X \leq \mu - t} \leq \expp{-\frac{t^2}{2\mu}}\qquad\text{and}\qquad\prob{X \geq \mu + t} \leq \expp{-\frac{t^2}{3\mu}}.
  \]
\end{lemma}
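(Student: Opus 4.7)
The plan is to use the standard Chernoff method: apply Markov's inequality to the moment generating function $\expec{e^{\lambda X}}$ for a well-chosen $\lambda > 0$ and optimise over $\lambda$. First I would observe that for any $\lambda > 0$ and any real $s$,
\[
\prob{X \geq s} = \prob{e^{\lambda X} \geq e^{\lambda s}} \leq e^{-\lambda s}\, \expec{e^{\lambda X}},
\]
so everything reduces to estimating the moment generating function.

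The crucial unifying observation is that the Poisson and binomial cases admit a common MGF bound. For $X \sim \Poi{\mu}$ a direct power-series computation gives $\expec{e^{\lambda X}} = \expp{\mu(e^\lambda - 1)}$ exactly, while for $X \sim \Bin{n,p}$ with $\mu = np$ the elementary estimate $1+x \leq e^x$ yields $\expec{e^{\lambda X}} = (1-p+pe^\lambda)^n \leq \expp{\mu(e^\lambda - 1)}$. Setting $s = \mu + t$ and differentiating in $\lambda$ shows that the optimal choice is $\lambda = \ln(1+t/\mu)$, and plugging this in and simplifying produces
\[
\prob{X \geq \mu + t} \leq \expp{t - (\mu+t)\ln(1+t/\mu)} = e^{-\mu}\left(\frac{e\mu}{\mu+t}\right)^{\mu+t},
\]
which is the first inequality in the statement. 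The weaker form $(e\mu/t)^t$ then follows from the elementary rearrangement $(\mu+t)^{\mu+t} \geq \mu^\mu t^t$, a direct consequence of the convexity of $x \mapsto x \ln x$ (equivalently, the log-sum inequality).

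For the Gaussian-type bounds valid for $0 \leq t \leq \mu$, I would rewrite the Chernoff bound in its Cram\'er form $\expp{-\mu\, h(t/\mu)}$ with $h(x) \coloneqq (1+x)\ln(1+x) - x$, and deduce $\prob{X \geq \mu + t} \leq \expp{-t^2/(3\mu)}$ from the elementary estimate $h(x) \geq x^2/3$ on $[0, 3/2]$ (easily checked via the Bennett inequality $h(x) \geq x^2/(2(1+x/3))$, or a two-derivative calculation). The lower tail is handled symmetrically: applying Markov to $e^{-\lambda X}$ for $\lambda > 0$ yields a Cram\'er bound with $h^-(x) \coloneqq (1-x)\ln(1-x) + x$ in place of $h$, and the sharper constant in the exponent comes from the stronger inequality $h^-(x) \geq x^2/2$ on $[0,1]$, reflecting the fact that the lower tail of both distributions is lighter than the upper.

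There is no real obstacle here: the lemma is standard textbook material, which is why the excerpt defers to Mitzenmacher--Upfal and Janson--\L uczak--Ruci\'nski. The only mild bookkeeping is to unify the Poisson and binomial cases by dominating the binomial MGF by the Poisson one, after which both distributions are processed by an identical analytic argument in $h$ and $h^-$.
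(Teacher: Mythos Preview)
Your proof is correct and entirely standard. The paper itself does not prove this lemma at all; it simply states it with citations to Mitzenmacher--Upfal and Janson--\L{}uczak--Ruci\'nski, as you yourself note. So there is nothing to compare against, and your argument via the MGF bound $\expec{e^{\lambda X}} \leq \expp{\mu(e^\lambda-1)}$ (exact for Poisson, an upper bound for binomial via $1+x\leq e^x$) followed by optimisation in $\lambda$ is exactly the textbook route. One tiny remark: the inequality $(\mu+t)^{\mu+t} \geq \mu^\mu t^t$ you invoke for the weaker form is even more elementary than you suggest, since $\ln\mu$ and $\ln t$ are each at most $\ln(\mu+t)$.
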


\subsection{Concentration of a number of events}
We will repeatedly employ the following lemma, that if events for vertices some distance apart are independent, then the number of events is concentrated.

\begin{lemma} \label{lemma:concentration}
Let $d \in \mathbb{N}$ and let $0<p'<p<1$. Let $G$ be an $n$-vertex graph with
maximum degree at most $\D=\D(n)$. For every $x \in V(G)$, let $A_x$ be an
event which holds with probability at most $\exnd{p}$ and suppose that for any subset $X \subseteq V(G)$ of vertices at pairwise distance at least $d$, the events $\{A_x \colon x \in X\}$ are mutually independent. If $\D$ is sufficiently large,
then whp the number of $A_x$ which hold is at most $\exnd{p'}n$.
\end{lemma}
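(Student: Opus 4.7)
The plan is to apply \Cref{thm:Hajnal-Szemeredi} with parameter $i = d-1$ to partition $V(G)$ into $k := \D^{d-1}+1$ parts $V_1,\dots,V_k$, each of size at most $\lceil n/k\rceil$, with any two distinct vertices in the same part at pairwise distance at least $d$ in $G$. By the independence hypothesis, for every $j$ the indicators $\{\mathbb{1}_{A_x} : x \in V_j\}$ are mutually independent, so one can apply a Chernoff-type concentration on each part and sum the contributions via a union bound.

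Setting $X_j := \sum_{x\in V_j}\mathbb{1}_{A_x}$, $X := \sum_{j=1}^k X_j$, and $\lambda := \exnd{p'}n/k$, each $X_j$ is stochastically dominated by a $\Bin{\lceil n/k\rceil,\exnd{p}}$ variable, so $\mu_j := \expec{X_j} \leq \exnd{p}\lceil n/k\rceil$. Provided $n\geq k$, one computes
\[
  \lambda/\mu_j \;\geq\; \tfrac{1}{2}\expp{\D^p - \D^{p'}},
\]
which grows super-exponentially in $\D$. If $X > \exnd{p'}n$ then by pigeonhole some $X_j > \lambda$, and the first form of \Cref{lem:chernoff} gives
\[
  \prob{X_j \geq \lambda} \;\leq\; \parens{e\mu_j/\lambda}^{\lambda} \;\leq\; \expp{-\lambda(\D^p - \D^{p'})/2}
\]
for $\D$ sufficiently large depending on $p$ and $p'$. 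A union bound over the $k$ parts then yields
\[
  \prob{X > \exnd{p'}n} \;\leq\; k\cdot\expp{-\lambda(\D^p - \D^{p'})/2}.
\]

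The main obstacle is verifying this is $\smallo{1}$ as $n\to\infty$ uniformly over admissible $\D = \D(n)$. When $\lambda\to\infty$ with $n$, the displayed bound is super-polynomially small in $\lambda$, easily absorbing the prefactor $k \leq \D^{d-1}+1$. Otherwise $\lambda = \bigo{1}$, which forces $n = \bigo{k\,\expp{\D^{p'}}}$, and I instead invoke Markov's inequality to obtain $\prob{X \geq 1} \leq \expec{X} \leq n\exnd{p} = \bigo{k\,\expp{\D^{p'} - \D^p}}$, which is $\smallo{1}$ for large $\D$ and so yields the even stronger conclusion $X = 0$ whp (noting that $\exnd{p'}n$ is bounded in this regime). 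Once this case split is handled, what remains is a routine combination of Hajnal--Szemer\'edi and Chernoff, so I expect no further difficulty.
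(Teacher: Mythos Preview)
Your proof is correct and follows essentially the same route as the paper: Hajnal--Szemer\'edi to obtain independence within parts, Chernoff on each part, and a Markov fallback in the regime where the Chernoff bound is not directly useful. The only organizational difference is that the paper splits on $\D \gtrless \ln\ln n$ (Markov in the first case, Hajnal--Szemer\'edi plus Chernoff in the second), whereas you split on whether $\lambda \to \infty$ or $\lambda = \bigo{1}$; these amount to the same dichotomy. One small point to make explicit in your Case~B: the bound $k\,\expp{\D^{p'}-\D^p}$ is a fixed positive constant if $\D$ is bounded, so you should note that $\lambda = \bigo{1}$ together with $n \to \infty$ forces $\D \to \infty$ (since otherwise $n = \bigo{k\,\expp{\D^{p'}}}$ would be bounded), at which point your Markov bound is indeed $o(1)$.
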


It is clear from Markov's inequality (and detailed below) that the lemma's conclusion holds with probability tending to 1 as $\D \to \infty$, but the lemma's ``whp'' means with probability tending to 1 as $n \to \infty$.

\begin{proof}
Let us define $q = \frac{1}{2p}$ and let $X := |\{x \in V(G) \colon A_x \text{ holds} \}|$. We consider two cases, $\D \geq \ln \ln n$ and $\D \leq \ln \ln n$.

In the first case, since $\expec{X} \leq \exnd{p}n$, by Markov's inequality,
\begin{align*}
\prob{X > \exnd{p'} n}\leq \frac{\exnd{p}n}{\exnd{p'}n} 
&=
o_\D(1) = o(1),
\end{align*}
using that (within this case) as $n \to \infty$, necessarily $\D \to \infty$.

In the second case, by the Hajnal-Szemer\'edi Theorem (\Cref{thm:Hajnal-Szemeredi}) we may partition $V(G)$ into $t:=\D^{d-1}+1$ parts $V_1,\ldots, V_t$ such that for each $i \in [t]$ we have $n_i := |V_i|  \geq \frac{n}{\D^{d-1}+1} -1 \geq \frac{n}{2\D^{d-1}}$ for sufficiently large $n$ and vertices in the same part are at pairwise distance at least $d$. In particular, the events $\{A_x \colon x \in V_i\}$ are mutually independent for each $i$.

Hence, if we let $X_i := |\{x \in V_i \colon A_x \text{ holds} \}|$, then each
$X_i$ is binomially distributed, with $\expec{X_i} \leq m_i \coloneqq \exnd{p} n_i$ and $X = \sum_{i=1}^t X_i$. Note that our choice of $q$ implies that $m$ is almost linear in $n$. Indeed, since $\D^{d-1}$ and $\D^p$ are both $o(\ln n)$,
\begin{align*}
 \ln m_i &\geq -\D^p + \ln n - \ln(2\D^{d-1}) \geq (1-o(1)) \ln n .
\end{align*}
Then, by the Chernoff bound (\Cref{lem:chernoff}),
\begin{align*}
\prob{X_i \geq 10 \, m_i}
  &\leq \parens{\frac{e \cdot m_i}{10 \, m_i}}^{10 m_i} 
  < e^{-10 m_i}
  \leq \expp{-10 \: (1-o(1)) \: \ln n}
  < n^{-9}.
\end{align*}
Hence, since $t = \D^{d-1} + 1$ and $\D \leq \ln \ln n$, by a union bound the probability that there is some $i \in [t]$ such that $X_i \geq 10 m_i$ is at most $n^{-8} = o(1)$. Hence, with probability $1 - o(1)$,  
$$X = \sum_{i=1}^t X_i \leq 10 \sum_{i=1}^t m_i = 10 \exnd{p} \sum_{i=1}^t n_i
= 10 \exnd{p}n \leq \exnd{p'} n ,$$
where the last inequality uses that $\D$ is sufficiently large.
\end{proof}

\subsection{Poissonisation}\label{s:Poissonisation}
A useful technique that we will employ is \emph{Poissonisation} (see, e.g.,~\cite[Section 5.4]{MiUp17}).
As in the second part of \Cref{thm:main:star}, we run the semirandom process not for a fixed number of rounds $\tau n$ but a Poisson-distributed number of rounds $\Poi{\tau n}$.
Equivalently, we may think of running the semirandom process for a fixed time $\tau n$, with offers occurring at times given by a Poisson process of rate~$1$.
(In either case, each offer, independently, is of a uniformly random vertex.)

Analysing the Poissonised process is simpler because,
by the subdivision property of the Poisson process, the process described is equivalent to each vertex being offered at times given by a Poisson process of rate $1/n$, the processes all independent.
Hence, the following holds:
\begin{equation}\label{e:Poissonisation}
  \begin{split} &\text{In $\Poi{\tau n}$ rounds, each vertex is offered $\Poi{\tau}$ times, and the number of offers}\\
  &\qquad\qquad\qquad\,\,\,\text{for different vertices are mutually independent.}
  \end{split}
\end{equation}

\begin{remark} \label{PoEnough}
The two statements in \Cref{thm:main:star,thm:main:tree} are equivalent.
\end{remark}

\begin{proof}
For either theorem,
the Poisson case would establish that, for some strategy and some positive $f(\D) = o_\D(1)$,
taking $\mu = (1+f(\D)) \frac{\D n}{2}$, 
with $\Poi{\mu}$ offers the strategy succeeds whp.
By the Chernoff bound (\Cref{lem:chernoff}), this number of offers $\Poi\mu$ is whp at most 
$(1+\D^{-1/3}) \mu$:
the failure probability is
\begin{align*} 
\prob{ \Poi{\mu} > (1+\D^{-1/3}) \mu}
 &\leq \expp{- \frac{(\D^{-1/3} \mu)^2}{3 \mu}} 
 \leq \expp{-\frac{\D^{1/3} n}{6}} 
 =o(1). 
\end{align*}
This implies that the strategy succeeds whp in a fixed $(1+\D^{-1/3}) \mu = (1+o_\D(1))\frac{\D n}{2}$ rounds.

A similar argument shows the converse.
\end{proof}

\subsection{Balanced orientations}

We will also use a lemma, shown in \cite[Lemma 2.4]{BeGiHeKr2020}, on ``balanced'' orientations of a graph.

\begin{lemma}[\cite{BeGiHeKr2020}]\label{lem:graph_orientation}
    Let $H$ be a graph. Then there exists an orientation $\vec{H}$ of the edges of $H$ such that $d^+_{\vec{H}}(v) \leq \roundup{d(v)/2}$ for all $v \in V(H)$.
\end{lemma}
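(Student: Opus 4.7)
The plan is to produce $\vec{H}$ via Eulerian circuits, handling each connected component of $H$ separately and combining the orientations. Within a connected component, I would distinguish two cases based on whether it has vertices of odd degree.

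For a connected component $C$ in which every vertex has even degree, $C$ is Eulerian, so it admits a closed trail traversing each edge exactly once. I would orient each edge in the direction the trail uses it. Since every occurrence of a vertex $v$ along the trail contributes one in-edge and one out-edge (including at the start/end, as the trail is closed), we obtain $d^+_{\vec{C}}(v) = d_C(v)/2 = \roundup{d_C(v)/2}$.

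For a connected component $C$ containing vertices of odd degree, I would introduce an auxiliary vertex $v^*$ and join it by an edge to each odd-degree vertex of $C$. This yields a graph $C^+$ in which every vertex has even degree: the odd-degree vertices of $C$ each gain one edge, the even-degree vertices are unchanged, and $v^*$ itself has even degree since the number of odd-degree vertices in $C$ is even (by handshaking). Moreover, $C^+$ remains connected. Applying the Eulerian argument above to $C^+$ yields an orientation in which every vertex has out-degree exactly half its $C^+$-degree; I would then delete $v^*$ and its incident edges to recover an orientation of $C$.

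The only verification is the out-degree bound after deletion. A vertex $v$ of even degree $d_C(v) = 2k$ loses no edges, so $d^+_{\vec{C}}(v) = k = \roundup{d_C(v)/2}$. A vertex $v$ of odd degree $d_C(v) = 2k+1$ loses exactly the edge $vv^*$; in the orientation of $C^+$ it had out-degree $k+1$, so after deletion its out-degree is either $k$ or $k+1$, both at most $\roundup{d_C(v)/2} = k+1$. There is no real obstacle here: the argument is essentially routine once one uses the dummy-vertex trick to ensure all degrees are even before invoking Eulerianness.
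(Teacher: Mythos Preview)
Your proof is correct and follows essentially the same approach as the paper's: add an auxiliary vertex adjacent to all odd-degree vertices, take an Eulerian circuit in each component, orient along it, and delete the auxiliary vertex. The only cosmetic difference is that you add a separate dummy per component rather than one global dummy, which makes no material difference.
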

\Cref{lem:graph_orientation} can be proved by adding an extra vertex adjacent to vertices of odd degree, finding an Eulerian circuit in each connected component of the resulting graph, and deleting the extra vertex.

\section{The Greedy phase}\label{sec:greedy}

By \Cref{PoEnough}, it suffices to prove the Poisson case of \Cref{thm:main:star}.
In the following sections we formally describe and analyse the different phases of Builder's strategy. In each phase we assume Builder is given a (potentially empty) graph $G$ with certain properties and show that after a given number of rounds Builder's graph will have a new set of properties. In \Cref{sec:proof_main} we combine these phases to obtain a strategy satisfying \Cref{thm:main:star}. This way, each section is self-contained, with no need to track the time used in other phases. Throughout the remaining sections, let $H$ be an arbitrary graph on $n$ vertices with maximum degree $\D$. 

The first phase will be a \textsc{Greedy} {phase}. In this phase,  it will be notationally convenient to assume that $V(H) = V(G) = [n]$ by choosing an arbitrary embedding of $H$. We will fix an orientation $\vec{H}$ of $H$ satisfying the conclusion of \Cref{lem:graph_orientation}.
In this way, we will refer to edges incident to $v$ in $H$ as \emph{inedges} if they are oriented towards $v$ in $\vec{H}$, or \emph{outedges} otherwise. 

There will be two subphases of the \textsc{Greedy} phase, one for claiming outedges and one for claiming inedges. 

\begin{definition} \label{def:InOut}
When following the \textsc{Out} (resp.\ \textsc{In}) rule, if Builder is offered a vertex $v$, then Builder claims any of $v$'s unclaimed outedges (resp.\ inedges).
\end{definition}

\begin{remark} \label{rem:largeD}
If $\D = \smallomega{\ln n}$, the \textsc{Out} rule alone suffices to construct a copy of $H$ in the desired time and prove \Cref{thm:main:star}. 
\end{remark}
\begin{proof}
Run the \textsc{Out} rule for $\Poi{\tauout n}$ rounds with 
$\tauout = \roundup{\D/2} + 2\sqrt{\D \ln n}$;
observe that $\tauout = (1+o_\D(1)) \D/2$.
This strategy succeeds if every vertex is offered at least $\roundup{\D/2}$ times.
The probability that a given vertex is not offered this many times is, 
by \eqref{e:Poissonisation} and the Chernoff bound (\Cref{lem:chernoff}), at most
\begin{align*}
\prob{\Poi{\tauout} < \roundup{\frac{\D}{2}}}
 &\leq \expp{-\frac{4\D \ln n}{2(1+o_\D(1)) \D}}
 \leq \expp{-1.9 \ln n} 
 = n^{-1.9}.
\end{align*}
By the union bound, the probability of any failure is $o(1)$.
\end{proof}

\begin{remark} \label{assumeDsmall}
In proving \Cref{thm:main:star}, we may henceforth assume that $\D \leq \ln n \ln \ln n$,
since larger values are addressed by \Cref{rem:largeD}. 
\end{remark}

As said, we will use two subphases. 

\subsection{First subphase: OutGreedy}
In this subphase, recalling \Cref{def:InOut}, Builder will follow the \textsc{OutGreedy} strategy defined below. 

\begin{strategy}[\textsc{OutGreedy}]\label{strategy:greedy}
    The \textsc{OutGreedy} strategy consists of following the \textsc{Out} rule 
    for $\Poi{\tau_{\textsc{Out}} \cdot n}$ rounds, where $\tau_{\textsc{Out}} = \roundup{\D/2} + \D^{0.99}$.
\end{strategy}

Thus, Builder spends most of the allotted time claiming outedges. Although this is a very simple strategy, Builder can ``almost'' construct a copy of $H$ using only this strategy. To make this more precise, call an edge $e$ of $H$ \emph{failed} if it is not in $G$, and a vertex $v$ a \emph{failure} or \emph{failed vertex} if it is incident (in $H$) to at least one failed edge. 
We will say a vertex or edge is \emph{successful} if it is not failed. 

\begin{lemma}\label{lem:greedy_analysis}
Let $G$ be a graph. If Builder follows \Cref{strategy:greedy}, then \whp{} afterwards there are at most $\exnd{{0.95}}n$ failed vertices in the resulting graph $G'$.  
\end{lemma}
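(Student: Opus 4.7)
The plan is to exploit Poissonisation together with \Cref{lemma:concentration}. By the subdivision property~\eqref{e:Poissonisation}, each vertex $v$ is offered a number $N_v \sim \Poi{\tauout}$ of times, with these counts mutually independent across $v \in [n]$. Under the \textsc{Out} rule, every offer of a vertex $v$ adds one of $v$'s unclaimed outedges to the graph, so after the phase all outedges of $v$ are present in $G'$ whenever $N_v \geq d^+_{\vec H}(v)$; by \Cref{lem:graph_orientation} this is guaranteed once $N_v \geq \lceil \D/2 \rceil$. Define the ``low-offer'' event $A_v := \{N_v < \lceil \D/2 \rceil\}$. Since every edge of $H$ incident to $v$ is an outedge either of $v$ itself or of some neighbour $u \in N_H(v)$, the event that $v$ is failed is contained in $A_v \cup \bigcup_{u \in N_H(v)} A_u$.

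Next I would bound $\prob{A_v}$ via the Chernoff bound (\Cref{lem:chernoff}). Taking $\mu = \tauout = \lceil \D/2 \rceil + \D^{0.99}$ and $t = \D^{0.99}$, and using $\mu \leq \D$ for $\D$ large, we get
\[
\prob{A_v} \leq \expp{-\frac{\D^{1.98}}{2\mu}} \leq \expp{-\frac{\D^{0.98}}{2}} \leq \exnd{0.97}.
\]
A union bound over the at most $\D + 1$ vertices in $\{v\} \cup N_H(v)$ then yields
\[
\prob{v \text{ is failed}} \leq (\D + 1) \exnd{0.97} \leq \exnd{0.96},
\]
valid as long as $\D$ is large enough that $\ln(\D + 1) \leq \D^{0.01}$.

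Finally I would invoke \Cref{lemma:concentration} to pass from this per-vertex estimate to the claimed global bound. The event ``$v$ is failed'' depends only on the independent Poisson counts $N_v$ and $(N_u)_{u \in N_H(v)}$, so the failure events for vertices at pairwise $H$-distance at least~$3$ are mutually independent. Applying \Cref{lemma:concentration} to the graph $H$ (of maximum degree $\D$) with $d = 3$, $p = 0.96$ and $p' = 0.95$ gives that whp the number of failed vertices is at most $\exnd{0.95} n$, which is what \Cref{lem:greedy_analysis} asserts.

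The main obstacle is simply the careful bookkeeping of exponents: Chernoff costs roughly $\D^{0.98}$, the union bound over the closed neighbourhood costs an extra $\ln \D$ factor which we absorb by dropping the exponent to $0.96$, and \Cref{lemma:concentration} drops it once more to $0.95$. All of these losses fit comfortably inside the $\D^{0.99}$ slack built into $\tauout$, provided $\D$ is taken sufficiently large, as assumed throughout. Note that any edges of the input graph $G$ already present at the start of the phase only reduce the number of outedges Builder still has to claim, so they can only help.
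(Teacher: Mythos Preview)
Your proposal is correct and follows essentially the same approach as the paper: bound the probability that a vertex fails to claim all its outedges via Chernoff, union-bound over the closed neighbourhood to control the failure event, and then apply \Cref{lemma:concentration} at distance~$3$ to turn the per-vertex bound into the whp global bound. The only cosmetic difference is that the paper phrases things in terms of ``outfailures'' rather than your ``low-offer'' events $A_v$, but these coincide up to inclusion and the arithmetic is identical.
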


\begin{proof}
Let us call a vertex an \emph{outfailure} if it has outedges not contained in $G'$ and denote by $F^+$ the set of outfailures. Clearly each outfailure was offered fewer than $\roundup{{d(v)}/{2}}$ times during this subphase. On the other hand, by \eqref{e:Poissonisation} each vertex is offered $\Poi{\tau_{\textsc{Out}}}$ times in this subphase. Hence, by the Chernoff bound (\Cref{lem:chernoff}), for any vertex $v \in [n]$,
\begin{align}\label{eq:greedy_outfailure_prob}
    \prob{v \text{ is an outfailure}} &\leq \prob{\Poi{\tauout} < \roundup{\frac{\D}{2}}}\leq \expp{- \frac{\parens{\tauout-\roundup{\frac{\D}{2}}}^2}{2 \tauout}} \notag \\
    &
    \leq \expp{- \frac{\D^{1.98}}{6 \D}} 
    \leq \exnd{{0.97}}, 
\end{align} where the last inequality uses that $\D$ is sufficiently large. 

Note that if none of $\set{v} \cup N_H(v)$ are outfailures, then all inedges and outedges of $v$ are present, i.e., $v$ is not a failure. Hence, by a union bound,
\begin{align}\label{eq:greedy_failure_prob}
    \prob{v\text{ is a failure}} \leq \prob{\left(\set{v} \cup N_H(v)\right) \cap F^+ \neq \emptyset} \leq (\D+1) \exnd{{0.97}} \leq \exnd{{0.96}}.
\end{align}

Finally, by \eqref{e:Poissonisation} for a set of vertices at pairwise distance at least 3, failures are mutually independent. It follows from \Cref{lemma:concentration} that whp the total number of failures is at most $\exnd{0.95} n$.
\end{proof}

\subsection{Second subphase: InGreedy}

For reasons explained in \Cref{subsec:batches_limitations}, it will be useful to control the number of vertices which are adjacent to many failures. Specifically, we introduce the following definition.
\begin{definition} \label{def_hub}
A \emph{failure hub}, or \emph{hub} for short, is a vertex with at least $\D^{0.1}$ failed neighbours. Note that the vertex need not be a failure itself.
\end{definition}

In what follows, we will require that the number
of hubs is extremely small, superexponentially so in $\D$. This would follow relatively quickly from \eqref{eq:greedy_failure_prob} and \Cref{lemma:concentration} if the failure of neighbours of a fixed vertex happened somewhat `independently'. However, after the first subphase there are quite large dependencies between vertex failures in a fixed neigbhourhood. Indeed, if a vertex $v$ is not offered during this phase, and so Builder does not claim any of its outedges, then it and all of its neighbours are failures.

To this end, we follow the \textsc{OutGreedy} subphase by this \textsc{InGreedy} subphase in which we claim \emph{inedges}.

\begin{strategy}[\textsc{InGreedy}]\label{strategy:extragreedy}
Starting with an arbitrary graph, the \textsc{InGreedy} strategy consists of following the \textsc{In} rule for $\Poi{\tau_{\textsc{In}} \cdot n}$ rounds, where $\tau_{\textsc{In}} = \D^{0.99}$.
\end{strategy}

We now analyse the combined effect of the two subphases.

\begin{lemma}\label{lem:extragreedy_analysis}
Let $G$ be a graph. If Builder follows \Cref{strategy:greedy} and then subsequently \Cref{strategy:extragreedy}, then \whp{} afterwards there are at most $\exnd{{1.05}}n$ hubs in the resulting graph.
\end{lemma}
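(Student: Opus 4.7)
The plan is to bound $\prob{v \text{ is a hub}}$ for a fixed vertex $v$ by $\exnd{1.055}$, and then apply \Cref{lemma:concentration} with $d=5$ and $p'=1.05$. Let $F^+$ denote the set of outfailures after \Cref{strategy:greedy}. For each $w \in V(H)$, let $X_w$ denote the number of in-neighbours of $w$ that lie in $F^+$, and let $k_w^I$ denote the number of offers $w$ receives during \Cref{strategy:extragreedy}.

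The crucial observation is that if $w \notin F^+$ and $k_w^I \geq X_w$, then $w$ is not failed: not being an outfailure means all of $w$'s outedges were already claimed in \Cref{strategy:greedy}, while an inedge $uw$ can remain unclaimed after OutGreedy only if $u \in F^+$, so at most $X_w$ inedges of $w$ need clearing during \Cref{strategy:extragreedy}, and Builder's $k_w^I$ offers suffice. Contrapositively, if $w$ is failed, then either $w \in F^+$, or $X_w \geq 1$ and $k_w^I < X_w$. By pigeonhole, $v$ being a hub implies at least one of the events: (A) $|N_H(v) \cap F^+| \geq \D^{0.1}/2$, or (B) at least $\D^{0.1}/2$ vertices $w \in N_H(v)$ satisfy $X_w \geq 1$ and $k_w^I < X_w$.

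For (A), Poissonisation makes the events $\{u \in F^+\}$ for $u \in N_H(v)$ mutually independent, each of probability at most $\exnd{0.97}$ by \eqref{eq:greedy_outfailure_prob}, so a binomial tail bound gives $\prob{A} \leq \binom{\D}{\D^{0.1}/2} \cdot \exnd{0.97}^{\D^{0.1}/2} \leq \exnd{1.06}$ for large $\D$. The subtler event (B) is handled by conditioning on the entire outcome of \Cref{strategy:greedy}, which fixes $F^+$ and every $X_w$ but leaves the $k_w^I \sim \Poi{\D^{0.99}}$ mutually independent. Let $G$ be the event that $X_w \leq \D^{0.99}/2$ for all $w \in N_H(v)$; since $X_w$ is dominated by $\Bin{\D/2, \exnd{0.97}}$, a Chernoff bound and union bound give $\prob{\lnot G} \leq \exnd{1.9}$. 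On $G$, for each $w$ with $X_w \geq 1$ we have $\prob{k_w^I < X_w} \leq \prob{\Poi{\D^{0.99}} < \D^{0.99}/2} \leq \exp(-\D^{0.99}/8)$ by \Cref{lem:chernoff}, and the events $\{k_w^I < X_w\}_{w \in N_H(v)}$ are conditionally independent, so another binomial tail bound yields $\prob{B \cap G} \leq \binom{\D}{\D^{0.1}/2} \exp(-\D^{1.09}/16) \leq \exnd{1.08}$, whence $\prob{B} \leq 2\exnd{1.08}$.

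Combining gives $\prob{v \text{ is a hub}} \leq \exnd{1.06} + 2\exnd{1.08} \leq \exnd{1.055}$ for sufficiently large $\D$. Since this event depends only on offers to vertices at distance at most $2$ from $v$, events at vertices at pairwise distance at least $5$ are mutually independent, and \Cref{lemma:concentration} applied with $d=5$ delivers the bound $\exnd{1.05} n$ on the number of hubs whp. The main obstacle is the dependence between failure events at different neighbours of $v$, which may share outfailure in-neighbours and thereby couple the $X_w$'s; conditioning on the outcome of \Cref{strategy:greedy} sidesteps this and explains exactly why \Cref{strategy:extragreedy} is introduced --- it provides the independent per-vertex offers $k_w^I$ that power the key bound $\prob{k_w^I < X_w} \leq \exp(-\D^{0.99}/8)$ in case (B).
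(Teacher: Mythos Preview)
Your proof is correct and follows essentially the same approach as the paper: both bound $\prob{v\text{ is a hub}}$ by splitting into ``too many nearby outfailures'' (your event $A$, the paper's $\overline{\mathcal{E}(v)}$ on $N^2[v]$) versus ``\textsc{InGreedy} offers insufficient for enough neighbours'' (your $B$ with auxiliary $G$, the paper's bound \eqref{nohub2} with threshold $f=\D^{0.1}/2$), exploit Poissonisation for independence, and then invoke \Cref{lemma:concentration} with $d=5$. One minor slip: \Cref{lem:graph_orientation} only bounds out-degrees, so $X_w$ is dominated by $\Bin{\D,\exnd{0.97}}$ rather than $\Bin{\D/2,\exnd{0.97}}$, but this does not affect your bound $\prob{\lnot G}\leq \exnd{1.9}$.
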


\begin{proof}
As in \Cref{lem:greedy_analysis}, let us write $G'$ for Builder's graph after the \textsc{OutGreedy} strategy, and $F^+$ for its set of outfailures. 
Denote by $G''$ Builder's graph after the \textsc{InGreedy} strategy.

For each $v \in [n]$, let $\mathcal{E}(v)$ denote the event $\{|N^2[v] \cap F^+| \leq f\}$, with $f := \D^{0.1}/2$ and where $N^2[v] = \{v\} \cup N(v) \cup N(N(v))$. Note that $|N^2[v]| \leq \D^2+1$. 
Informally, $\mathcal{E}(v)$ is the event that there are few outfailures near~$v$, and we will show that it is likely to hold.
If it does hold, then of course within $N(v)$ there are at most $f$ outfailures.
Also, while a $w \in N(v)$ may fail due to infailures, if $\mathcal{E}(v)$ holds then $w$ has at most $f$ infailed edges, and if $w$ receives $f$ or more offers, it can repair all of these. 
Thus, if $\mathcal{E}(v)$ holds and at most $f$ neighbours of $v$ fail to get $f$ offers, 
then $v$ has at most $2f$ failed neighbours and is not a hub. 
That is, $v$ is a hub only in the event $\overline{\mathcal{E}(v)}$ or if more than $f$ neighbours of $v$ fail to get $f$ offers.

Since by \eqref{e:Poissonisation} the events $\{w \in F^+ \colon w \in N^2[v]\}$ are independent (all outfailures are independent), we may use the Chernoff bound (\Cref{lem:chernoff}) together with \eqref{eq:greedy_outfailure_prob} to bound
  \begin{align}\label{eq:greedy_many_failures}
      \prob{\overline{\mathcal{E}(v)}} 
      \leq \left(\frac{e \cdot (\D^2 + 1) \exnd{{0.97}}}{\D^{0.1}/2}\right)^{\D^{0.1}/2} 
      \leq \left(\exnd{{0.965}}\right)^{\D^{0.1}/2} 
      \leq \exnd{1.06}.
  \end{align}

By \eqref{e:Poissonisation} and the Chernoff bound (\Cref{lem:chernoff}), the probability that a vertex is offered fewer than ${f = \D^{0.1}/2}$ times in the \textsc{InGreedy} subphase is at most
  \begin{equation}\label{eq:poisson_lower_tail_app}
    \prob{\Poi{\tau_{\textsc{In}}} < \D^{0.1}/2} \leq \expp{-\frac{(\D^{0.99} - \D^{0.1}/2)^2}{2\D^{0.99}}} \leq \exnd{{0.98}}.
  \end{equation}  
Again by \eqref{e:Poissonisation}, these events are independent for different vertices, so, using \eqref{eq:poisson_lower_tail_app}, the probability that more than $f$ vertices in $N(v)$ are offered fewer than $f$ times is at most
\begin{align}
\prob{\Bin{|N(v)|, \exnd{{0.98}}} > f}
  &\leq  \prob{\Bin{\D, \exnd{{0.98}}} > \D^{0.1}/2} \notag
  \\& \leq  \left(\frac{e \D \cdot \exnd{{0.98}}}{\D^{0.1}/2}\right)^{\D^{0.1}/2} \label{nohub2.1}
  \\& \leq \left(\exnd{{0.98}}\right)^{\D^{0.1}/2} 
  \leq \exnd{{1.07}} ,  \label{nohub2}
\end{align}
using in \eqref{nohub2.1} the Chernoff bound (\Cref{lem:chernoff}).

Summing \eqref{eq:greedy_many_failures} and \eqref{nohub2},
\begin{align} \label{eq:greedy_hub_prob:2}
   &\prob{v\text{ is a hub after the \textsc{Greedy} phase}} 
 \leq 2 \exnd{1.06} .
\end{align}

For a set of vertices at pairwise distance at least 5, by \eqref{e:Poissonisation} the events that these vertices are hubs in $G''$ are mutually independent. 
It follows from \Cref{lemma:concentration} that whp there are at most $\exnd{{1.05}}n$ hubs in $G''$. 
\end{proof}

\begin{remark} \label{whyIn}
If we used only the \textsc{Out} rule we could not hope to obtain the result of \Cref{lem:extragreedy_analysis}.
In the allotted time we could not extend the \textsc{OutGreedy} subphase significantly,
so it is inevitable that a vertex $v$ receiving too few offers would cause many neighbours to fail, making $v$ a hub.
That the \textsc{In} rule leads to a better result, as shown by the lemma, 
may intuitively be ascribed to overcoming the focussed nature of the semirandom star process:
neighbours of $v$ can independently repair their own failures, 
a process more robust than relying on $v$ alone. 
\end{remark}

\begin{remark}\label{greedyLimit}
Although the \textsc{Greedy} strategy is efficient to get from an empty graph to most of $H$, it is insufficient to complete a copy of $H$ in the desired time. 
For any constant $c$, in time $\Poi{ c n}$ (more than our total budget, if $c > \D/2$), each vertex receives $\Poi c$ offers. However, a given edge $vw$ cannot be constructed if both $v$ and $w$ receive no offers, and this occurs with probability $\approx \exp(-2c)$. 
While \eqref{eq:greedy_failure_prob} shows that the failure probability can be limited to $\exnd{0.96}$, the above shows that no greedy strategy can boost that $0.96$ exponent above~1
in $O(\D n)$ rounds.
\end{remark}

\section{Repairing failures}\label{sec:substitutions}

As just mentioned, if $\D = \bigo{\ln n}$, then the \textsc{Greedy} strategy has
limitations. Therefore, to reduce the number of failures further after the \textsc{Greedy} phase, we will adapt an idea introduced in \cite{BeGiHeKr2020}. Given a failed vertex $v$, our hope is to find a vertex $v'$ such that after swapping the labels of $v$ and $v'$ in $G$, neither vertex will be a failure. This will be the case precisely if Builder's graph contains all edges from $v$ to neighbours of $v'$ (in $H$) and all edges from $v'$ to neighbours of $v$ (in $H$). 
(See \Cref{fig:swaps}.)
Denoting this set of required edges by the \emph{gadget}
\[ R(v, v') := \{v'w \colon w \in N_H(v)\} \cup \{vw' \colon w' \in N_H(v')\}, \]
if there exist a failed vertex $v$ and a vertex $v'$ such that all edges in $R(v,v')$ are present in Builder's graph, then we say $v$ is \emph{repairable} (using $v'$). Note that if $v$ is repairable using $v'$ and we swap the labels of $v$ and $v'$, then neither vertex will be failed. We refer to this process as \emph{swapping} $v$ and $v'$.

For efficiency, we will attempt to repair a set $I$ of failed vertices simultaneously.
We will avoid trying to repair two adjacent vertices $v$ and $w$ at once, since repairing $v$ with $v'$ means that $v'$ has an edge to $w$, but repairing $w$ means replacing it with some $w'$, and there is no reason to expect the necessary edge $v'w'$ to be present. 
So we will restrict $I$ to be an independent set in $H$.
For the same reason, no replacement candidate $v'$ should be in $N_H(I)$.
To avoid confusion, and since anyway $I$ will be small, we will also avoid choosing any replacement candidate $v'$ from $I$.
We will also make the natural restriction of choosing distinct replacement candidates for each $v \in I$.
That is, for each $v \in I$ we define a set $C(v)$ of swapping candidates,
with these sets disjoint from $I \cup N_H(I)$ and from one another. 
As discussed in the introduction, with $v' \in C(v)$ and $w' \in N_H(v')$, we will try to make edges between $w'$ and $v$ using offers of $w'$.
So furthermore, to avoid ``overburdening'' $w'$, we will insist that the replacement candidates have pairwise disjoint \emph{neighbourhoods}. Finally, for convenience, we will also insist that all candidate sets $C(v)$ have the same cardinality.

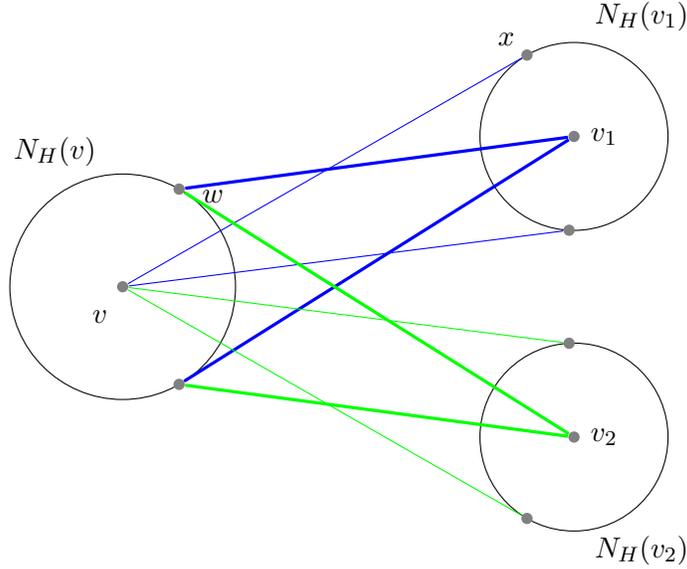
\begin{figure}[t]
        \centering
\begin{tikzpicture}[
    node distance=3cm,
    every node/.style={circle, draw, minimum size=0.15cm},
    connection/.style={thick},
    blue line/.style={connection, blue},
    green line/.style={connection, green!70!black}
]

\draw[draw=none] (0,0) circle (1.25cm);
\draw[draw=none] (4,2) circle (1cm);
\draw[draw=none] (4,-2) circle (1cm);

\draw (-2,0) circle (1.5cm);
\draw (4,2) circle (1.25cm);
\draw (4,-2) circle (1.25cm);

\node[circle, fill=gray, inner sep=1pt, draw=none] (v) at (-2,0) {};
\node[circle, fill=gray, inner sep=1pt, draw=none] (v1) at (4,2) {};
\node[circle, fill=gray, inner sep=1pt, draw=none] (v2) at (4,-2) {};

\node[circle, fill=gray, inner sep=1pt, draw=none] (w) at ($(-2,0) + (60:1.5)$) {};
\node[circle, fill=gray, inner sep=1pt, draw=none]  (w2) at ($(-2,0) + (-60:1.5)$) {};

\node[circle, fill=gray, inner sep=1pt, draw=none] (u1) at ($(4,2) + (120:1.25)$) {}; 
\node[circle, fill=gray, inner sep=1pt, draw=none]  (u2) at ($(4,2) + (-93:1.25)$) {};

\node[circle, fill=gray, inner sep=1pt, draw=none]  (t2) at ($(4,-2) + (93:1.25)$) {};
\node[circle, fill=gray, inner sep=1pt, draw=none]  (t1) at ($(4,-2) + (-120:1.25)$) {};

\node[draw=none] at (-2.3,-0.4) {$v$};
\node[draw=none] at (4.4,2) {$v_1$};
\node[draw=none] at (4.4,-2) {$v_2$};
\node[draw=none] at (-0.8,1.2) {$w$};
\node[draw=none] at (3.1,3.3) {$x$}; 

\node[draw=none] at (-2.9,1.8) {$N_H(v)$};
\node[draw=none] at (4.9,3.6) {$N_H(v_1)$};
\node[draw=none] at (4.9,-3.5) {$N_H(v_2)$};

\draw[blue] (v) -- (u1);
\draw[blue] (v) -- (u2);
\draw[blue,very thick] (w) -- (v1);
\draw[blue,very thick] (w2) -- (v1);
\draw[green] (v) -- (t1);
\draw[green] (v) -- (t2);
\draw[green,very thick] (w) -- (v2);
\draw[green,very thick] (w2) -- (v2);
\end{tikzpicture}
\caption{%
A failed vertex $v$, a swapping candidate $v_1$ and its gadget $R(v,v_1)$ (in blue), and a second swapping candidate $v_2$ and its gadget $R(v,v_2)$ (in green). 
Vertex $w$ is an illustrative neighbour of $v$; likewise $x$ of $v_1$. 
All vertices shown distinct are distinct, and 
the gadgets for $v$ are vertex-disjoint from those for 
other failed vertices considered at the same time
(except that $w$ may be a neighbour of other failed vertices).
If all blue edges $R(v,v_1)$ can be made, the $v_1$ gadget succeeds:
swapping the labels of $v$ and $v_1$ means both will have edges to their respective $H$-neighbourhoods.
Likewise for green and candidate $v_2$.
Thin edges will be easy to build. Construction of thick edges is discussed in the text; see particularly \Cref{subsec:batches_workaround}.}
\label{fig:swaps}
\end{figure}

To sum up, our constraints for swapping candidates are captured in the definition below. 
\begin{definition}\label{d:candidatelists}
    Let $I \subseteq V(H)$ be a non-empty independent set of failed vertices. A \emph{candidate list} $\CC$ for $I$ is a collection $(C(v))_{v \in I}$ of subsets of $V(H)$ of equal cardinality such that:
    \begin{enumerate}[$(i)$]
        \item\label{i:cand1} 
        The sets $(C(v))_{v \in I}$ are pairwise disjoint and disjoint from $I$.
        \item\label{i:cand2} 
        The set $I \cup \bigcup_{v \in I} C(v)$ has minimum distance at least~2.
        \item\label{i:list:dist_three} 
        The set $\bigcup_{v \in I} C(v)$ has minimum distance at least~3.
    \end{enumerate}
    The \emph{\length} of $\CC$ is the cardinality of $C(v)$ for any $v \in I$. An edge in the set $\bigcup_{v \in I}\bigcup_{v' \in C(v)} R(v, v')$ is called \emph{$\CC$-relevant}.
\end{definition}

In all cases, we will construct the candidate list $\CC$ greedily. 
That is, each vertex in $I$ ``blocks'' at most $\D+1$ vertices from consideration,
each candidate $v'$ blocks at most $\D^2+1$ vertices,
and any non-blocked vertex can be added to a smallest set $C(v)$.

In general, Builder's aim is to claim $\CC$-relevant edges whenever possible. However, to simplify analysis, we will generally define a set $A$ of \emph{active} vertices and only consider rounds in which Builder is offered a vertex of this set, where the definition of $A$ depends on the phase of the building process. 

In Sections~\ref{sec:bridging_phase}~and~\ref{sec:final_phase}, we will describe two phases in which we repair failures. The former is quite delicate, and will reduce the number of failed vertices to $\exnd{1+\varepsilon} n$, bypassing the limitations of the \textsc{Greedy} strategy. This allows us to be less careful in the latter phase since we can construct candidate lists with exponential \length, leading to a very high probability of success for swapping a vertex.

The strategy in both of these phases follows the same ``metarule''.

\begin{metarule}[The \textsc{Meta}$(\CC,A)$-rule]\label{metarule}
    Let $I \subseteq V(H)$ be an independent set of failed vertices, $\CC$ a candidate list for $I$, and $A \subseteq V(H)$ a set of \emph{active} vertices. When offered a vertex $v \in A$, Builder claims any unclaimed $\CC$-relevant edge incident to $v$, if such an edge exists. 
\end{metarule}

After a certain number of rounds of applying the MetaRule,
we \emph{execute} its results,
performing a maximum set of swaps consistent with $\CC$.

\begin{execution}
For every $v \in I$, if there is any $v' \in C(v)$ such that the edges of $R(v, v')$ are all present in Builder's graph, we arbitrarily pick any such $v'$ and swap the labels of $v$ and $v'$.
\end{execution}

\begin{remark}
After Execution, every $v \in V$ previously successful (not failed) remains so, 
and every $v \in I$ for which there is any $v' \in C(v)$ such that the MetaRule's application yielded all edges in $R(v,v')$, becomes successful.
\end{remark}
\begin{proof}
This follows from \Cref{d:candidatelists} properties \ref{i:cand1} and \ref{i:cand2};
also see \Cref{fig:swaps}.
No neighbour of any element in $I \cup \bigcup_{v \in I} C(v)$ is in that set, so
the neighbours can be thought of as a fixed substrate.
Therefore any previously successful vertex not swapped (relabeled) remains successful.

Furthermore, an executed swap of $v \in I$ and $v' \in C(v)$ will remain successful given subsequent swaps, say of $u$ and $u'$.
Indeed, because the neighbours remain fixed, this could fail only if $v$ or $v'$ were relabeled by a $u$--$u'$ swap, i.e., unless $v$ or $v'$ is equal to $u$ or $u'$. 
Since $u,v \in I$ and $u',v' \in \bigcup_{x \in I} C(x)$ and these two sets are disjoint by construction, that can only occur if $u=v$ (contradicting that we execute at most one swap per vertex in $I$) or if $u'=v'$ (in which case, since $u' \in C(u)$ and $v' \in C(v)$, we conclude that $u=v$ by property \ref{i:cand1} of \Cref{d:candidatelists}).
\end{proof}

\subsection{Batches and limitations}\label{subsec:batches_limitations}
Let us sketch a naive strategy to repair vertices, and give some crude estimates of the parameters involved and the limitations of this strategy. 

By the Hajnal-Szemer\'edi Theorem (\Cref{thm:Hajnal-Szemeredi}), we can divide the set $F$ of failed vertices into $\D+1$ independent sets, which we call \emph{batches}, of roughly equal size. Given a batch $I$, we can greedily construct a candidate list $\CC$ for $I$ from the vertices in $V(G) \setminus F$ satisfying \Cref{d:candidatelists}, 
with \length~$s$. Naively it would seem that to maximise the chances of success, we would like to maximise $s$.

Now, up to unimportant factors polynomial in $\D$, $s$ can be at most $n/\card{F}$. However,
\Cref{greedyLimit} means that the greedy algorithm must leave at least $\expp{-2\D}n$ failed edges, and (up to $\D$ factors) the same number of failed vertices, implying that in the best case $s \lessapprox \expp{2\D}$. Note that, in reality, \Cref{lem:greedy_analysis} would give us a slightly smaller $s \approx \expp{\D^{0.95}}$.

For $v \in I$, $v' \in C(v)$ and $w \in N_H(v)$, how will we claim $\CC$-relevant edges $(v',w)$
(the thick edges in \Cref{fig:swaps})?
For each such $w$, there are precisely $s$ such edges, one for each $v' \in C(v)$.
On the other hand, for each $v' \in C(v)$, $v'$ is a replacement candidate only for $v$,
and thus incident to at most $\D$ such edges. 
Since $s$ is much larger than $\D$, we will be much more successful claiming such edges on offers of $v'$.

Builder will spend $\Poi{\tau n}$ rounds per batch.
The chance that a given $v' \in C(v)$ receives $\D$ offers, to make edges to all of $N(v)$, is crudely estimated by $\prob{\Poi{\tau} \geq \D} \approx \parens{\frac{\tau}{\D}}^\D$. 
So, the probability that a given $v$ is repairable is, again estimating crudely with a union bound,
\begin{align}\label{eq:repair_prob_approx}
 \prob{v \text{ is repairable}} \lessapprox  |C(v)| \parens{\frac{\tau}{\D}}^\D \lessapprox  s \parens{\frac{\tau}{\D}}^\D.
\end{align}

However, after the Greedy stage we only have time for $o_\D(1)\D n$ further rounds, so in order to spend $\Poi{\tau n}$ for each of $\D$ batches requires that $\tau = o_\D(1) < 1$.
So, estimating crudely, the probability that a fixed vertex gets repaired is
\begin{align} \label{eq:repair_prob_approx2}
  \prob{v \text{ is repairable}} 
   \lessapprox s \parens{\frac{\tau}{\D}}^\D 
   \lessapprox e^{2\D} \D^{- \D  } = o_\D(1),
\end{align} 
and only a tiny proportion of the vertices would be repaired by this strategy.

\subsection{Workaround} \label{subsec:batches_workaround}
To make this work, we employ a ``bridging'' phase in our strategy, which uses two key ideas. First, by excluding hubs, we are able to divide $F$ into fewer independent batches and hence spend more time on each batch (increasing $\tau$). 

Second, and more importantly, we make the unintuitive choice to artificially reduce the coverage $s$. 
In itself, this would make each vertex less likely to be swapped.
However, consider a thick edge in \Cref{fig:swaps}: a $\CC$-relevant edge $(v',w)$ with $v \in I$ and $w \in N_H(v)$.
With $s \leq \D$ it becomes more efficient to claim such edges with offers of $w$.
Doing so, we will be able to claim most of the $\CC$-relevant edges incident to most $v' \in C(v)$.
At this point, a typical $v' \in C(v)$ only needs to be offered a small number of times to claim the remaining $\CC$-relevant edges $(v',w)$, and so we switch and claim such edges with offers of $v'$.

We will show that the following the above strategy results in just $\exnd{{1.04}} n$ failed vertices. 

Then, in the ``final'' phase, the method works as presented in \Cref{subsec:batches_limitations}: we can take $s$ about $\expp{\D^{1.04}}$ (and larger in subsequent iterations), 
at which point the quantity $s(\frac 1 \D)^\D$ in \eqref{eq:repair_prob_approx} becomes large, and repair becomes overwhelmingly likely.
That is, in \Cref{fig:swaps}, thick edges like $(w,v_1)$ are made on offer of $v_1$: it is unlikely that $v_1$ receives $\D$ offers, but with $s$ superexponential, it is likely that some $v_i$ succeeds.

\section{The bridging phase}\label{sec:bridging_phase}

Let $G$ be a graph for which the set of failed vertices, denoted by $F$, has size at most $\exnd{{0.95}}n$, and the set of hubs, denoted by $M$, has size at most $\exnd{{1.05}}n$. Our goal in the \textsc{Bridging} phase is to reduce the number of failed vertices to $\exnd{{1.04}}n$.

For the analysis, it will be crucial that we do not attempt to repair failed vertices which are or are adjacent to a hub. However, this will not affect our goal, since there are at most $(\D+1) \exnd{{1.05}}n \leq \exnd{{1.04}}n/2$ such vertices. So, let us denote by
\begin{align} \label{non-hub-adj}
F' &= F \setminus (M \cup N_H(M))
\end{align}
this subset of failed vertices. 
Throughout this section, take 
\begin{align*}
  k = \D^{0.1}.
\end{align*}

Because $F'$ has no hubs, by the Hajnal-Szemer\'edi equitable colouring theorem (\Cref{thm:Hajnal-Szemeredi}) there exists a partition
\[ F' = F_1 \cup \cdots \cup F_k \]
of $F'$ into a disjoint and balanced union of $k$ independent sets, which will be processed separately in $k$ \emph{batches}. Since $\card{F'} \leq \card{F} \leq \exnd{{0.9}}n$ 
we can greedily create a candidate list $\CC$ for $F'$ with
\length
$$s \geq \frac{n-(\D+1)\card{F'}} {(\D^2+1) \cdot \card{F'}}
 \geq \frac{n} {2 \D^2 \exnd{{0.9}}n}
 \geq \expp{\D^{0.89}}
 \geq \D^{0.6} . $$
We will artificially truncate this to a candidate list $\CC$ with coverage exactly $s=\D^{0.6}$;
this is important. 
Let $\CC_i$ be the candidate list obtained by restricting to $F_i$, that is, $\CC_i = (C(v))_{v \in F_i}$;
note that each $\CC_i$ also has coverage~$s$.
(We could also have separately constructed candidate lists for each $F_i$ as needed; it makes no difference.)
Our \textsc{Bridging} strategy is as follows.
\begin{strategy}[\textsc{Bridging}]\label{strat:bridging}
For each $i \in [k]$ sequentially, Builder performs the following two subphases.
\begin{enumerate}[$(i)$]
\item\label{i:preprocessing} In the first subphase, Builder follows the \textsc{Meta}$(\CC_i,A)$-rule with $A = N_H(F_i)$ for $\Poi{\D^{0.8}n}$ rounds. 
    That is, when offered a vertex $w \in N_H(F_i)$, Builder will claim an unclaimed edge (if there is any) of the form $wv'$ with $v' \in C(v)$ for some $v \in F_i$ with $w \in N_H(v)$.
    Otherwise, Builder claims no edge. 
    
    \smallskip\noindent\emph{Note:} given $w\in N_H(F_i)$ there may be $\D^{0.1}$ choices for $v$ (no more, since $w$ is not a hub),
    and for each such $v$, there are at most $s=\D^{0.6}$ choices of $v'$.
    
\item\label{i:main} In the second subphase, Builder follows the \textsc{Meta}$(\CC_i,A)$-rule with $A = \bigcup_{v \in F_i} C(v) \cup N_H(C(v))$ for $\Poi{\D^{0.8}n}$ rounds.
    That is, if offered a vertex $v' \in C(v)$ for some $v \in F_i$, Builder will claim an unclaimed edge (if there is any) of the form $v'w$, for $w \in N_H(v)$. If offered a $w' \in N_H(C(v))$ for some $v \in F_i$, Builder will claim the edge $w'v$ if it is not already claimed. Otherwise, Builder claims no edge.
    
    \noindent\emph{Note:} given $v' \in C(v)$, $v$ is uniquely determined, and there are at most $\D$ choices of $w$.
    Also, given $w' \in N_H(C(v))$, that is, $w' \in N_H(v')$ for some $v' \in C(v)$, $v'$ is uniquely determined and so is $v$.
\end{enumerate}
After the second subphase, we execute the results for $\CC_i$, i.e., we repair all possible vertices. 
Note that this updates the labelling of $[n]$ and so might change the relevant edges. 
Then, if $i < k$, we start afresh with batch $F_{i+1}$. 
\end{strategy}

This strategy overcomes the obstacles outlined in \Cref{subsec:batches_limitations}
in the fashion summarised in \Cref{subsec:batches_workaround}.

\begin{lemma}\label{lem:bridging_analysis}
Let $G$ be a graph containing at most $\exnd{{0.95}} n $ failures and at most $\exnd{1.05} n $ hubs.
If Builder follows \Cref{strat:bridging}, then whp afterwards there are at most  $\exnd{{1.04}} n$ failures (including hubs and hub-adjacent vertices) in the resulting graph.
\end{lemma}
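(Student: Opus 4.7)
The plan is to show, for each vertex $v \in F' := F \setminus (M \cup N_H(M))$, that the probability $v$ is not repaired in its batch is at most $\expp{-\Omega(\D^{1.4})}$, and then apply \Cref{lemma:concentration}. This would yield whp at most $\exnd{1.1} n \leq \exnd{1.04}n/2$ unrepaired vertices of $F'$; adding the at most $(\D+1)\exnd{1.05}n \leq \exnd{1.04}n/2$ hubs and hub-neighbours then gives the claimed bound.

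Fix $v \in F_i$. To analyse subphase $(i)$, by Poissonisation each $w \in N_H(v)$ is offered $\Poi{\D^{0.8}}$ times, independently across $w$. Since $w$ is not a hub, it belongs to $N_H(u)$ for at most $\D^{0.1}$ vertices $u \in F_i$, and each such $u$ has exactly $s = \D^{0.6}$ candidates in $C(u)$, so $w$ has at most $\D^{0.7}$ $\CC_i$-relevant edges incident to it. I call $w$ \emph{deficient} if it is offered fewer than $\D^{0.7}$ times; by \Cref{lem:chernoff}, $w$ is deficient with probability at most $\expp{-\Omega(\D^{0.8})}$. Letting $Z_v$ denote the binomial count of deficient neighbours of $v$, a further Chernoff bound yields $\prob{Z_v \geq \D^{0.7}} \leq \expp{-\Omega(\D^{1.5})}$.

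Conditional on $Z_v \leq \D^{0.7}$, I next analyse subphase $(ii)$ candidate by candidate. Each $c \in C(v)$ has at most $\D^{0.7}$ unclaimed edges $cw$ remaining from $R(v,c)$, so for $c$ to complete its gadget it suffices that (a) $c$ is offered at least $\D^{0.7}$ times in subphase $(ii)$ (by the metarule, each offer of $c$ claims an unclaimed $\CC_i$-relevant edge), and (b) every $x \in N_H(c)$ is offered at least once (so the edge $vx$ is claimed). By \Cref{lem:chernoff} and a union bound, each of (a) and (b) fails with probability $\expp{-\Omega(\D^{0.8})}$. Crucially, the distance-$\geq 3$ condition on $\bigcup_{u \in F_i} C(u)$ makes the sets $\{c\} \cup N_H(c)$ pairwise disjoint across $c \in C(v)$, so by Poissonisation the success events for different candidates are conditionally mutually independent. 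Since $|C(v)| = s = \D^{0.6}$, the probability that no candidate succeeds is at most $\parens{\expp{-\Omega(\D^{0.8})}}^s = \expp{-\Omega(\D^{1.4})}$.

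Combining both steps, $\prob{v \text{ not repaired in its batch}} \leq \expp{-\Omega(\D^{1.4})}$. This event depends only on offers to vertices within distance~$3$ of $v$ (namely $N_H(v)$, $C(v)$, and $N_H(C(v))$), and offers in different batches involve disjoint portions of the Poisson offer stream; hence for $v, v'$ at pairwise distance at least~$7$ the events ``$v$ not repaired'' and ``$v'$ not repaired'' are mutually independent, and \Cref{lemma:concentration} yields the desired concentration. The main obstacle---resolved by the unintuitive choice $s = \D^{0.6}$---is balancing the two subphases: $s$ must be small enough that each $w$ can whp cover its load of $\leq \D^{0.1}\cdot s$ relevant edges in subphase $(i)$, yet large enough that the $s$ conditionally independent candidate attempts in subphase $(ii)$ push the non-repair probability below the $\exnd{1.04}$ threshold required for the conclusion.
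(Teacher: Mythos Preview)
Your approach mirrors the paper's: bound the non-repair probability for each $v \in F'$ via the two subphases, then appeal to \Cref{lemma:concentration}. Your parameter choices differ slightly (you use the threshold $\D^{0.7}$ for $Z_v$ where the paper uses $\D^{0.3}$), but this is harmless and your probability estimates are correct.

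There is, however, a genuine gap in your independence argument. You write that the event ``depends only on offers to vertices within distance~$3$ of $v$ (namely $N_H(v)$, $C(v)$, and $N_H(C(v))$)''. The list of relevant vertices is right, but the claim that $C(v)$ and $N_H(C(v))$ lie within distance~$3$ of $v$ in $H$ is false: \Cref{d:candidatelists} imposes only \emph{lower} bounds on distances, and the greedy construction places candidates essentially arbitrarily in $V(H)$. Hence your conclusion that pairwise distance $\geq 7$ forces disjoint dependency sets does not follow.

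The correct justification --- which in fact gives independence already at pairwise distance $\geq 3$, as the paper uses --- goes via the candidate-list properties rather than graph distance. For $v,u$ in the same batch $F_i$ with $d_H(v,u)\geq 3$: the subphase-1 events (your ``$Z_v > \D^{0.7}$'') depend on offers to $N_H(v)$ and $N_H(u)$, which are disjoint; the subphase-2 events (your ``every candidate fails (a) or (b)'') depend on offers to $C(v)\cup N_H(C(v))$ and $C(u)\cup N_H(C(u))$, and these are disjoint by conditions \ref{i:cand1}--\ref{i:list:dist_three} of \Cref{d:candidatelists}, regardless of $d_H(v,u)$; cross-terms mix subphase~1 with subphase~2, hence independent Poisson periods. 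For $v,u$ in different batches, all relevant offers lie in disjoint Poisson periods. With this argument (applied to the surrogate events $\{Z_v>\D^{0.7}\}\cup\{\text{all candidates fail}\}$, and extended trivially to $v\notin F'$), \Cref{lemma:concentration} applies with $d=3$.
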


\begin{proof}
\Cref{fig:swaps} may be helpful. 
We focus on the failed vertices that are not hubs nor hub-adjacent.
Each such vertex $v$ belongs to a unique set $F_i\subseteq F'$ and has candidate list $C$. 
 Recall that \Cref{strat:bridging} gives coverage $s = \card{C} = \D^{0.6}$.

\noindent\textbf{First subphase.} 
Consider a failed $v \in F'$.
\begin{assert} \label{C1}
Let $E_1(w)$ be the event that a given $w \in N_H(v)$ is offered at least $s$ times, and thus makes all its incident edges.
Then $E_1(w)$ has failure probability $p_1 \leq \exnd{0.79}$.
\end{assert}
\begin{reason}
\sloppypar
By \eqref{e:Poissonisation} and the Chernoff bound (\Cref{lem:chernoff}), the failure probability is
$ p_1 
 \leq \prob{\Poi{\tau} < s}
 = \prob{\Poi{\D^{0.8}} < \D^{0.7}} 
 \leq \prob{\Poi{\D^{0.8}} < \tfrac12 \D^{0.8}} 
 \leq \expp{-\frac18 \D^{0.8}}
 \leq \exnd{{0.79}} $.
\end{reason}

\begin{assert} \label{C2}
Let $E_2(v)$ be the event that the number of $w \in N_H(v)$ failing to make all their incident edges is at most $\D^{0.3}$.
Then $E_2(v)$ has failure probability $p_2 \leq \exnd{1.08}$.
\end{assert}
\begin{reason}
By \eqref{e:Poissonisation} for each $w \in N_H(v)$ the event $E_1(w)$ fails independently of the other neighbours of $v$,
so the chance of $\D^{0.3}$ or more $E_1$-failures is 
$ p_2
 \leq \prob{\Bin{\D,p_1} \geq \D^{0.3}}$.
Taking $\mu = \D \, p_1 \leq \D \exnd{0.79}$ and  $\mu+t = \D^{0.3}$,
by the Chernoff bound (\Cref{lem:chernoff}),
\begin{align*}
p_2 \leq
 \parens{\frac{e \mu}{\mu+t}}^{\mu+t}
 \leq \parens{\frac{e \D \exnd{0.79}}{\D^{0.3}}}^{\D^{0.3}}
 \leq \parens{\exnd{0.78}}^{\D^{0.3}}
 = \exnd{1.08} .
\end{align*}
\end{reason}

\noindent\textbf{Second subphase.} Focus on a single $v' \in C(v)$.
\begin{assert} \label{C3}
Let $E_3(v')$ be the event that $v'$ is offered at least $\D^{0.3}$ times,
so that (if $E_2$ holds) these offers complete all the edges $(w,v')$ with $w\in N(v)$.
Then $E_3(v')$ has failure probability at most $\exnd{0.79}$.
\end{assert}
\begin{reason}
Analogously with $E_1(v)$, the probability is at most 
$ \prob{\Poi{\D^{0.8}} < \D^{0.3}} 
 \leq \exnd{{0.79}}
$.
\end{reason}

\begin{assert} \label{C4}
Let $E_4(v')$ be the event that every $w' \in N_H(v')$ is offered at least once, 
so that together these $w'$ can make all the relevant edges from $v$ to $N_H(v')$.
Then $E_4(v')$ has failure probability at most $\exnd{0.79}$.
\end{assert}
\begin{reason}
By \eqref{e:Poissonisation}, a given $w'$ is never offered with probability $\prob{\Poi{\D^{0.8}} = 0} = \exnd{{0.8}}$. 
By the union bound, the probability that some $w' \in N_H(v')$ is not offered is at most
$\D \exnd{{0.8}} \leq \exnd{{0.79}}$. 
\end{reason}

\begin{assert} \label{C5}
Let $E_5(v)$ be the event that there is some $v' \in C(v)$ for which events $E_3(v')$ and $E_4(v')$ both hold.
Then $E_5(v)$ has failure probability at most $\exnd{1.38}$.
\end{assert}
\begin{reason}
For $v'\in C(v)$, by Assertions \ref{C3} and \ref{C4} the combined failure probability of $E_3(v')$ and $E_4(v')$ is at most $2 \exnd{0.79} \leq \exnd{0.78}$.
By \eqref{e:Poissonisation}, the events $\{E_5(v'):v'\in C(v)\}$ are mutually independent, and $|C(v)|=s=\D^{0.6}$, so the probability that all of them fail is at most 
$\parens{\exnd{0.78}}^{\D^{0.6}} = \exnd{1.38}$.
\end{reason}

\noindent\textbf{Strategy as a whole.}

\begin{assert} \label{C6}
Let $E(v)$ be the event that $v$ is repairable by $v'$, for some $v' \in C(v)$. 
Then $E(v)$ has failure probability at most $2 \exnd{1.08}$.
\end{assert}
\begin{reason}
We have that $v$ is repairable, by some $v' \in C(v)$, unless the first-phase event $E_2(v)$ fails
or the second-phase event $E_5(v)$ fails. 
This has probability at most $\exnd{1.08} + \exnd{1.38} \leq 2 \exnd{1.08}$.
\end{reason}

\smallskip

Extend the definition of $E(v)$ also to vertices $v$ outside of $F'$, making the event true by definition in such cases. 
It remains the case that $E(v)$ fails with probability at most $ \exnd{1.07}$.
For a set of vertices $X$ at pairwise distance at least three, by \eqref{e:Poissonisation} the events $\{E(v) \colon v \in X\}$ are mutually independent.
It follows from \Cref{lemma:concentration} that the total number of failed events is whp at most $\exnd{1.06} n$.

Adding to this the hub and hub-adjacent vertices ignored in this phase, of which by hypothesis there are at most 
$|M \cup N_H(M)| 
 \leq (1+\D) \exnd{1.05} n
 \leq \tfrac12 \exnd{{1.04}} n$,
the total number of vertices failed at the end of this phase is whp at most $\exnd{1.04} n$.
\end{proof}

\section{Final phase}\label{sec:final_phase}
Starting with the set $F_1$ of failed vertices from the bridging phase, 
in this phase we will reduce the number of vertices iteratively, the $i$th iteration reducing $F_i$ to a smaller failed set $F_{i+1}$, until there are no failed vertices remaining. 
The iterations use progressively less time, so that the total time is $o_\D(\D) n$. 
Each iteration applies the idea described in \Cref{subsec:batches_limitations}.

Consider an iteration addressing a failed set~$F$, with
\begin{align} \label{e:F1}
  \card{F} \leq \card{F_1} \leq \exnd{1.04} n .
\end{align}
Choose an independent subset $I \subseteq F$ of cardinality $\card{I} = \card{F} / (\D+1)$, and candidate set $\CC$ for $I$, with coverage $s$; 
details follow shortly, but in contrast to the bridging phase, we will choose the coverage as large as possible.
Apply the following strategy, which builds each repair gadget $R(v,v')$ using offers of $v'$ and $N_H(v')$.

\begin{strategy}\label{strat:final_phase}
With $\tau=\tau(s)$,
Builder follows the \textsc{Meta}$(\CC,A)$-rule with $A = \bigcup_{v \in I} C(v) \cup N_H(C(v))$ for $\Poi{\tau n}$ rounds: If offered a vertex $v' \in C(v)$ for some $v \in I$, Builder will claim any unclaimed edge of the form $v'w$, for $w \in N_H(v)$; if offered a vertex $w' \in N_H(C(v))$ for some $v \in I$,  Builder will claim the edge $vw'$ if it is not already claimed. Otherwise, Builder claims no edge. 
After all rounds, Builder executes their result, performing a maximum set of repairs.
\end{strategy}

\begin{remark}
This procedure differs structurally a little from that in the bridging phase. 
There, \emph{one time}, we \emph{partitioned} a failed set $F$ into independent sets and addressed each in turn.
Here, we construct just \emph{one} independent set, address it to obtain a smaller failed set, and \emph{iterate}. 
In the bridging phase, the partitioning was essential: if we repeatedly chose new independent sets, a failed vertex would be in none of them with substantial probability, so we could reduce the number of failures only by a constant factor. 
In this final phase, the iteration is essential to resolve every single failed vertex.
Within an iteration, we choose to work with a single independent set, for simplicity. (Alternatively, we could have worked in turn through $\D+1$ independent sets partitioning $F$, which is more time-efficient.)
Our choice means that there is no gain in resolving more than a constant fraction of the failures in an independent set: even if we resolved absolutely every failure in one independent set, that would resolve only a $1/(\D+1)$ fraction of all failures, and resolving (for example) half this many leads to running time of the same order of magnitude.
So, reminiscent of the greedy phase, extra iterations here decrease the number of failures ``only'' exponentially; the difference is that now the iterations get faster and faster.
\end{remark}

We now give the details of \Cref{strat:final_phase}.
Greedily construct the independent set $I \subseteq F$, with $\card I = \card F / (\D+1)$,
with each vertex added to $I$ blocking at most $\D$ other vertices from consideration.
Then greedily construct a candidate set $\CC$ for $I$, respecting \Cref{d:candidatelists}, as follows.
Each vertex in $I$ blocks vertices in a 2-ball around it; since there are at most $\D^2+1$ vertices in this ball and even the initial failed set $F_1$ has tiny cardinality, this leaves more than $n/2$ vertices available as candidates.
Then, each candidate vertex added to $\CC$ blocks vertices in a 3-ball round it, with at most $\D^3+1$ vertices per ball. 
It follows that we may obtain a candidate set with 
$\left| \bigcup_{v \in I} C(v) \right| = n/2(\D^3+1)$ exactly (up to integrality).

Allocating these candidate vertices equally to the failed vertices $I$ gives a candidate list with \length 
\begin{align} \label{e:sFinal}
s 
 &= \frac{\left| \bigcup_{v \in I} C(v) \right|}{\card I}
 = \frac{n/2(\D^3+1)}{\card{F} / (\D+1)}
 = \frac{n}{2\card{F}} \frac{\D+1}{\D^3+1}
 \geq \expd{1.03} ,
\end{align}
using $\card F \leq \card{F_1} \leq \exnd{1.04} n$ and assuming that $\D$ is sufficiently large.
Let
\begin{equation}\label{e:tau}
\tau = \tau(s) \coloneqq \sqrt{\D} \cdot s^{-1/ (3\D)} .
\end{equation}

\begin{lemma}\label{lem:analysis:finalphase}
Let $G$ be a graph, $I$ an independent set of failed vertices, and $\CC$ a candidate list for $I$ of \length $s$. 
Let Builder follow \Cref{strat:final_phase} for $\Poi{\tau n}$ rounds, with $s$ and $\tau$ given by \eqref{e:sFinal} and \eqref{e:tau}.
Then, for $\D$ sufficiently large and 
for a given $v \in I$,
\[
  \prob{v\text{ is not repairable}} \leq \expp{-s^{1/3}} .
\]
Furthermore, the events that $v$ is not repairable for $v \in I$ are mutually independent.
\end{lemma}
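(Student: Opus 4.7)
The plan is to identify, for each candidate $v' \in C(v)$, a simple sufficient condition ensuring that the gadget $R(v,v')$ ends up fully present in Builder's graph after this phase, to lower-bound the probability of this condition, and then to exploit mutual independence across $v' \in C(v)$ (afforded by property~(iii) of \Cref{d:candidatelists}) to bring the failure probability below $\expp{-s^{1/3}}$.

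For each $v' \in C(v)$, let $A_{v'}$ denote the event that $v'$ is offered at least $\D$ times in this phase, and $B_{v'}$ the event that every $w' \in N_H(v')$ is offered at least once. The claim is that $A_{v'} \cap B_{v'}$ suffices for $v$ to be repairable using $v'$. Under $A_{v'}$, since $v'$ belongs to $C(v)$ for a unique $v \in I$ (property~(i) of \Cref{d:candidatelists}), all $\D$ of its offers are dedicated to claiming edges $\{v'w : w \in N_H(v)\}$, of which there are at most $|N_H(v)| \leq \D$. Under $B_{v'}$, property~(iii) of \Cref{d:candidatelists} guarantees that each $w' \in N_H(v')$ lies in $N_H(v'')$ for at most one $v'' \in \bigcup_u C(u)$, so upon each offer of $w'$ the strategy claims the edge $vw'$ without ambiguity. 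Hence all edges of $R(v,v')$ end up in Builder's graph, so $v$ is repairable using $v'$.

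To lower-bound $\prob{A_{v'} \cap B_{v'}}$, I use that by \eqref{e:Poissonisation}, offers to the disjoint vertex sets $\{v'\}$ and $N_H(v')$ are independent. The $v'$ offers form a $\Poi{\tau}$ random variable, giving $\prob{A_{v'}} \geq \prob{\Poi{\tau} = \D} = e^{-\tau} \tau^\D / \D!$. For $B_{v'}$, each $w' \in N_H(v')$ is independently offered $\Poi{\tau}$ times, so $\prob{B_{v'}} \geq (1 - e^{-\tau})^\D \geq (\tau/2)^\D$, using that $1 - e^{-\tau} \geq \tau/2$ for $\tau \leq 1$ (and $\tau \to 0$ by \eqref{e:sFinal} and \eqref{e:tau}). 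Substituting $\tau^{2\D} = \D^\D s^{-2/3}$ and applying the Stirling bound $\D! \leq e\sqrt{\D}(\D/e)^\D$ yields
\[
\prob{A_{v'} \cap B_{v'}} \geq \frac{(e/2)^\D}{e^{1+\tau}\sqrt{\D}} \cdot s^{-2/3} \geq s^{-2/3}
\]
for $\D$ sufficiently large, since $(e/2)^\D$ grows exponentially while the denominator is sub-exponential.

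To conclude, the events $\{A_{v'} \cap B_{v'}\}_{v' \in C(v)}$ are mutually independent because property~(iii) of \Cref{d:candidatelists} ensures that the sets $\{v'\} \cup N_H(v')$ are pairwise disjoint across $v' \in C(v)$, and offers to disjoint vertex sets are independent by \eqref{e:Poissonisation}. Combining,
\[
\prob{v \text{ not repairable}} \leq \prod_{v' \in C(v)} \parens{1 - \prob{A_{v'} \cap B_{v'}}} \leq \expp{-s \cdot s^{-2/3}} = \expp{-s^{1/3}}.
\]
Mutual independence across $v \in I$ follows from the same disjointness: property~(iii) implies that the supports $\bigcup_{v' \in C(v)}(\{v'\} \cup N_H(v'))$ are pairwise disjoint for distinct $v \in I$, and the event ``$v$ is not repairable'' depends only on offers to (and actions taken at) vertices within its own support. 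The main technical point is the constant-chasing verification that $(e/2)^\D / (e^{1+\tau}\sqrt{\D}) \geq 1$, which hinges on $\tau$ being small; this is precisely the role of the lower bound $s \geq \expd{1.03}$ coming from \eqref{e:sFinal}, which forces $\tau \to 0$.
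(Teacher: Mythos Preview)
Your argument is correct and follows essentially the same route as the paper: identify the sufficient condition that $v'$ receives $\D$ offers and each $w'\in N_H(v')$ receives at least one, lower-bound this probability via Poissonisation and Stirling to get $s^{-2/3}$, and then use the pairwise-disjoint supports (property~(iii) of \Cref{d:candidatelists}) for independence across $v'\in C(v)$ and across $v\in I$. The only cosmetic differences are your slightly sharper Stirling constant and the explicit substitution $\tau^{2\D}=\D^\D s^{-2/3}$, but the structure and the key estimates coincide with the paper's proof.
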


\begin{proof}
Let $v' \in C(v)$ be a swapping candidate.  
All edges required for this swap, i.e., all edges in $R(v,v')$, are claimed if $v'$ is offered at least $\D$ times and every neighbour $w' \in N_H(v')$ is offered at least once.
By \eqref{e:Poissonisation} each vertex is offered $\Poi \tau$ times, so
\begin{equation*}
 \prob{\text{$v$ is repairable using $v'$}} 
 = \prob{\Poi{\tau} \geq \D} \cdot \prob{\Poi{\tau} \geq 1}^{|N_H(v')|} 
 \geq \frac{e^{-\tau} \tau^\D}{\D!} \cdot \parens{1 - e^{-\tau}}^{\D}.
\end{equation*}
Note that, by \eqref{e:sFinal}, \eqref{e:tau}, as $\D \to \infty$ we have that $\tau \to 0$ and so we may assume that $\D$ is sufficiently large and $\tau$ is sufficiently small. In which case, $e^{-\tau} \geq \tfrac12$, $1-e^{-\tau} \geq \tfrac{\tau}{2}$, and (by Stirling's formula) $\D! \leq 3 \sqrt{\D} \, (\D/e)^\D$.
Thus,
\begin{align} \label{eq1:lem:final_phase}
\prob{\text{$v$ is repairable using $v'$}} 
   &\geq \frac{\tfrac12 \cdot \tau^\D}{3 \sqrt{\D} \, (\D/e)^\D} \cdot \parens{\frac{\tau}{2}}^\D
   \geq \frac{1}{6\sqrt{\D}} \parens{\frac{e}{2}}^\D \parens{\frac{\tau^2}{\D}}^\D
   \geq \parens{\frac{\tau}{\sqrt \D}}^{2\D}
    = s^{-2/3} .
\end{align} 

By \eqref{e:Poissonisation} the events that $v$ is repairable using $v'$ are mutually independent for distinct $v' \in C(v)$, by \Cref{d:candidatelists} in the construction of $\CC$,
and each $v$ has $|C(v)| = s$ swapping candidates.
Hence, 
\begin{align} \label{eq:lem:final_phase}
  \prob{v \text{ is not repairable}} 
  \leq \parens{1-s^{-2/3} }^{s} 
  \leq \parens{\expp{-s^{-2/3}}}^s
  = \expp{-s^{1/3}} .
\end{align}
Finally, by \Cref{d:candidatelists}, the events that $v$ is not repairable depend on offers of distinct vertices for distinct $v \in I$ and so by \eqref{e:Poissonisation} these events are mutually independent.
\end{proof}

\begin{lemma}\label{claim:final_phase:swaps}
In every iteration of \Cref{strat:final_phase}, with parameters given by \eqref{e:F1}, \eqref{e:sFinal}, and \eqref{e:tau},
at least a $\frac{1}{2\D}$-fraction of the failed vertices $F$ are repaired, with overall failure probability $\bigo{n^{-1}}$. 
\end{lemma}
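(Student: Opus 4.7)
The plan is to apply \Cref{lem:analysis:finalphase}, which tells us that for each $v \in I$ the event that $v$ is not repairable has probability at most $p := \expp{-s^{1/3}}$, and that these events are mutually independent over $v \in I$. Let $X$ be the number of vertices of $I$ that fail to be repaired. The conclusion follows (modulo integrality) once we show $\prob{X > |I|(\D-1)/(2\D)} = \bigo{n^{-1}}$, since the complement $|I| - X$ is then at least $|I|(\D+1)/(2\D) = |F|/(2\D)$.

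The main difficulty is that although $s \geq \expd{1.03}$, this is only a (large) constant when $\D$ is a fixed sufficiently large constant. In that case $p$ is also merely a constant, and when $|F|$ is linear in $n$ we have $|I|p = \Theta(n)$, so Markov is useless. I therefore split on the size of $|I|$. In the large-$|I|$ regime, say $|I| \geq C \log n$ for a suitable constant $C$, the bound $s \geq \expd{1.03}$ already makes $p$ a sufficiently small constant (in particular, $p \leq 1/(6e)$ for $\D$ large enough), and so the Chernoff tail $\prob{X \geq \alpha|I|} \leq (ep/\alpha)^{\alpha|I|}$ with $\alpha = 1/3$ (noting $1/3 \leq (\D-1)/(2\D)$ for $\D \geq 3$) yields a bound of at most $2^{-|I|/3} \leq 1/n$.

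In the complementary small-$|I|$ regime ($|I| < C \log n$), $|F|$ is polylogarithmic in $n$, so \eqref{e:sFinal} gives $s \geq n(\D+1)/(2|F|(\D^3+1))$ polynomially large in $n$ (using $\D \leq \ln n \ln \ln n$ from \Cref{assumeDsmall}). Hence $p$ is superpolynomially small in $n$, Markov yields $\prob{X \geq 1} \leq |I|p = \smallo{n^{-1}}$, and the event $X = 0$ means all of $I$ is repaired, providing $|I| = \lfloor|F|/(\D+1)\rfloor \geq |F|/(2\D)$ repairs (for $|F|$ above a constant multiple of $\D$, else the claim is trivial). The main obstacle is precisely this case split: the bound $s \geq \expd{1.03}$ is only a constant for fixed $\D$, and it is exactly when $|F|$ becomes small that $s$ grows polynomially in $n$, compensating for the corresponding loss in $|I|$.
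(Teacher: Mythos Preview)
Your approach is essentially the same as the paper's: split on whether $|I|$ is above or below a logarithmic threshold, apply Chernoff in the large-$|I|$ regime using that $s \geq \expd{1.03}$ makes $p=\expp{-s^{1/3}}$ a small constant, and in the small-$|I|$ regime use that $s$ is polynomially large in $n$ so that $p$ is superpolynomially small. One small omission: the lemma's ``overall failure probability $O(n^{-1})$'' refers to the union over \emph{all} iterations, not to a single one, so a per-iteration bound of $1/n$ is not quite enough; the paper obtains $n^{-2}$ per iteration (taking $|I|\ge 36\ln n$) and then union-bounds over at most $n$ iterations, and you can do the same simply by enlarging your constant $C$.
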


\begin{proof}
As discussed, choose an independent set $I$ of cardinality
$\ceil{\card F / (\D+1)}$.
(Integrality is plausibly relevant when $F$ becomes small.)
It suffices to repair 2/3rds of these vertices, since 
$\frac23 \card I \geq \frac23 \frac{\card F}{\D+1} \geq \frac{1}{2\D} \card F$ for $\D \geq 3$.
Execute \Cref{strat:final_phase} and
let $I'$ denote the subset of $I$ \emph{not} repaired;
failure requires $\card{I'} > \card I / 3$.

For $|I| <  36 \ln n$, it holds by \eqref{e:sFinal} 
that $s \geq \frac{n}{\poly(\D) \card{F}} \geq \sqrt n$.
Thus for any $v \in I$, 
by \Cref{lem:analysis:finalphase}, 
\begin{align*}
\prob{v\text{ is not repairable}} 
   \leq \expp{-s^{1/3}} 
   = \expp{-n^{1/6}} 
   = o(n^{-2}) .
\end{align*}
Since there are only $n$ vertices, by the union bound, the probability of any failure of this sort is at most $n^{-1}$. In particular, whp the process will end once the independent sets get this small.

For  $|I| \geq  36 \ln n$, by \Cref{lem:analysis:finalphase}, $|I'|$ is dominated by a $\Bin{|I|, \expp{-s^{1/3}}}$ random variable.
Since by \eqref{e:sFinal} $s \geq \exnd{1.03}$, 
$|I'|$ is dominated by a $\Bin{|I|, 1/6}$ random variable.
This has mean $\mu =\card I / 6$ and
failure implies that $\card{I'}$ is at least twice this value.
By the Chernoff bounds (\Cref{lem:chernoff}),
\begin{align*}
  \prob{|I'| \geq \frac{|I|}{3}} 
   \leq \expp{-\frac \mu 3} 
   = \expp{-\frac{\card I}{18}}
   \leq \expp{-2 \ln n}
   = n^{-2} .
\end{align*}
Finally, since each successful iteration repairs at least one vertex, there at most $n$ successful iterations. Hence, by a union bound the chance of any failure of this sort is at most $n^{-1}$.
\end{proof}

In what follows we will assume that the conclusion of \Cref{claim:final_phase:swaps} holds deterministically.

\begin{lemma} \label{lem:finalTotalTime}
With iteration $i$ of \Cref{strat:final_phase} running for time $\Poi{\tau_i n}$, the total time for the final phase is $\Poi{\tau n}$,
with $\tau \leq \expp{-\tfrac14 \D^{0.01}} = o_\D(1)$.
\end{lemma}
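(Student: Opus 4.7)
The plan is to reduce the statement to bounding a sum $\sum_i \tau_i$, then exploit the geometric shrinkage of the failure sets to show this sum is dominated by its first term up to polynomial-in-$\D$ factors. Since the offers in distinct iterations are independent Poisson, the total offer count is (stochastically) $\Poi{(\sum_i \tau_i) n}$, so it suffices to prove $\tau := \sum_i \tau_i \leq \expp{-\tfrac14 \D^{0.01}}$.

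I would then propagate the geometric decay $|F_{i+1}| \leq (1 - \tfrac{1}{2\D}) |F_i|$ guaranteed by \Cref{claim:final_phase:swaps} through the definitions \eqref{e:sFinal} and \eqref{e:tau}. Iterating the decay yields $|F_i| \leq |F_1|(1-\tfrac{1}{2\D})^{i-1}$, so by \eqref{e:sFinal}, $s_i \geq s_1 (1-\tfrac{1}{2\D})^{-(i-1)}$; taking logarithms and using $\ln(1/(1-x)) \geq x$ produces
\[
\ln s_i \geq \D^{1.03} + \frac{i-1}{2\D}.
\]
Substituting into \eqref{e:tau} gives the key per-iteration bound
\[
\tau_i \leq \sqrt{\D}\, \expp{-\D^{0.03}/3} \cdot \expp{-(i-1)/(6\D^2)}.
\]

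The final step is a geometric sum: $\sum_{i \geq 1} \expp{-(i-1)/(6\D^2)} \leq 7\D^2$ for $\D$ sufficiently large, so $\tau \leq 7 \D^{2.5} \expp{-\D^{0.03}/3}$. Since $\D^{0.03}/3$ eventually dominates both $\ln(7\D^{2.5})$ and $\D^{0.01}/4$, for large enough $\D$ we get $\tau \leq \expp{-\tfrac14 \D^{0.01}} = o_\D(1)$, as desired. I do not anticipate a real obstacle: the bridging phase has already done the substantive work by making $|F_1|$ small enough that $s_1 \geq \expd{1.03}$, forcing $\tau_1$ to be subexponentially small in $\D$; the geometric growth of $s_i$ with $i$ only makes subsequent $\tau_i$ smaller, and the summation contributes merely a polynomial-in-$\D$ factor that is trivially absorbed by the exponential bound on $\tau_1$.
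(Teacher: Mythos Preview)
Your proposal is correct and follows essentially the same approach as the paper: bound $\tau_1$ via $s_1 \geq \expd{1.03}$, propagate the $(1-\tfrac{1}{2\D})$ shrinkage of $|F_i|$ into geometric decay of $\tau_i$, sum the resulting geometric series to pick up only a $\poly(\D)$ factor, and absorb that into the exponential bound on $\tau_1$. The only cosmetic difference is that you take logarithms and track $\ln s_i$ explicitly, whereas the paper bounds the ratio $\tau_{i+1}/\tau_i \leq (1-\tfrac{1}{2\D})^{1/(3\D)}$ directly before summing; both routes yield the same $\D^{5/2}\expp{-\tfrac13\D^{0.03}}$-type bound.
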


\begin{proof}
By \eqref{e:sFinal} and \eqref{e:tau},
\begin{align*}
 \tau_1
  &= \sqrt \D \exnd{1.03}^{-1/3\D} 
  = \sqrt \D \expp{-\frac13 \D^{0.03}} .
\end{align*}
Also from \eqref{e:sFinal} and \eqref{e:tau}, using \Cref{claim:final_phase:swaps}
\begin{align*}
  \frac{\tau_{i+1}}{\tau_i} 
    &= \parens{\frac{s_{i+1}}{s_i}}^{-1/3\D} 
    \leq \parens{1-\frac{1}{2\D}}^{1/3\D} .
\end{align*} 
Then, 
\begin{align*}
 \tau 
  &\leq \sum_{i=1}^\infty \tau_i 
  \leq \tau_1 \sum_{i =0}^\infty \parens{1-\frac{1}{2\D}}^{i/3\D} 
  \leq  \tau_1 \sum_{i =0}^\infty \parens{1-\frac{1}{2\D}}^{\lfloor i/3\D \rfloor}
  \leq   3 \D  \cdot \tau_1 \sum_{j =0}^{\infty} \parens{1-\frac{1}{2\D}}^{j} 
  \\& = (3\D)(2\D) \tau_1 
   = 6 \D^{5/2} \expp{-\frac13 \D^{0.03}}
  \leq \exnd{{0.02}} 
  = o_\D(1).
\end{align*} 
\end{proof}

\section{Proof of Theorem \ref{thm:main:star}} \label{sec:proof_main}
In this section we recapitulate Builder's strategy and prove our main result. 

\begin{proof}[Proof of \Cref{thm:main:star}] Starting with the empty graph,
Builder first follows the \textsc{Greedy} strategy (Strategies \ref{strategy:greedy} and \ref{strategy:extragreedy}), described in \Cref{sec:greedy}. By \Cref{lem:greedy_analysis,lem:extragreedy_analysis} it holds  that \whp{} after the \textsc{Greedy} phase there are at most $\exnd{{0.95}}n$ failed vertices and at most $\exnd{{1.05}}n$ hubs. 

Builder then follows the \textsc{Bridging} strategy (\Cref{strat:bridging}), described in \Cref{sec:bridging_phase}. By \Cref{lem:bridging_analysis}, whp after the bridging phase there are at most $\exnd{{1.04}}n$ failed vertices. 

Finally, Builder iteratively follows \Cref{strat:final_phase}, as described in \Cref{sec:final_phase}, iterating until no failed vertices remain.
By \Cref{claim:final_phase:swaps}, whp
every iteration succeeds in repairing at least a $\tfrac1{2\D}$ fraction of the failed vertices.
It follows that whp all three phases succeed. Also note that upon termination of the \textsc{Final}  phase, Builder has succeeded in constructing a graph that contains a copy of $H$. 

The \textsc{Greedy} phase takes a total of  
$\Poi{\tau_{\textsc{Out}} \cdot n}+\Poi{\tau_{\textsc{In}} \cdot n}$ rounds, 
where $\tau_{\textsc{Out}} = \frac{\D}{2} + \D^{0.99}$   
and $\tau_{\textsc{In}} = \D^{0.99}$. 
The \textsc{Bridging} phase uses $\Poi{\tau_B \cdot n}$ rounds, with $\tau_B = 2 \D^{0.8}$, for each of $\D^{0.1}$ batches, 
for a total of $\Poi{3\D^{0.9}} n$ rounds.
Finally, by \Cref{lem:finalTotalTime}, whp the \textsc{Final} phase uses $\Poi{\tau_F \cdot n}$ rounds, with $\tau_F \leq \expp{-\tfrac14 \D^{0.01}}$.

Hence, altogether, whp the number of rounds Builder uses is at most 
\[
\Poi{ \left(\frac\D2 + 2\D^{0.99} + 3\D^{0.9} +\expp{-\tfrac14 \D^{0.01}} \right)\cdot n}
 = \Poi{\parens{1 + o_\D(1)}\frac{\D n}{2}}.
\]

The above together with \Cref{PoEnough} complete the proof of \Cref{thm:main:star}.
\end{proof}

\section{The semirandom tree process}\label{sec:proof_main_tree}
In this section we describe how Builder's strategy can be adjusted to work in the semirandom \text{tree} process. While the main ideas used to derive \Cref{thm:main:star} are also applicable here, there are some technical difficulties we highlight in this section. 

\subsection{General probabilistic tools and results}
In this subsection, we introduce some probabilistic tools necessary to analyse the edge distribution of a random spanning tree and use them to derive some key properties about uniform spanning trees. To sample a spanning tree uniformly at random, we rely on the  \emph{Aldous-Broder algorithm} \cite{Al1990,Br1989}.
Roughly, this process performs a random walk on $G$ (in our case, $G=K_n$) and only records the edges which lead to vertices not previously visited.
Specifically, begin with an arbitrary vertex $v_1 \in V(G)$ and perform a simple random walk $(X_m)_{m \geq 1}$ on the graph $G$ with initial vertex $X_1=v_1$. For each vertex $v \in V(G)\setminus \{v_1\}$ let $t(w) =\inf \{ m \in \N \colon X_m=v\}$ be the hitting time for $v$. Note that $t(v)$ is almost surely finite for each $v$. We construct a random tree $T \subseteq G$ by letting the parent of each $v \in V(G)\setminus \{v_1\}$ be the vertex $X_{t(v) -1}$ immediately preceding the first time we visit~$v$. 

\begin{lemma}[\cite{Al1990,Br1989}]\label{lem:aldousbroder}
Let $G = (V,E)$ be a graph. If $T$ is constructed according to the Aldous-Broder algorithm on $G$, then $T$ is uniformly distributed over all spanning trees of $G$.
\end{lemma}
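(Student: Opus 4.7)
The plan is to prove uniformity via a time-reversal argument, which is the classical approach underlying both the Aldous and Broder papers. The key observation is that while the Aldous-Broder algorithm records \emph{first-entry} edges along a forward walk, the resulting tree can equivalently be described in terms of \emph{last-exit} edges of the time-reversed walk, and the latter distribution is much easier to analyze directly.

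First, I would lift the finite construction to a canonical setting: a stationary bi-infinite random walk $(X_m)_{m \in \mathbb{Z}}$ on $G$ started from the stationary distribution (uniform on $V$ when $G$ is regular, as in the application $G = K_n$). At any fixed time $N$, define a rooted spanning tree $\widetilde{T}_N$ with root $X_N$ by letting the parent of each $v \neq X_N$ be $X_{k(v)+1}$, where $k(v) = \max\{k \le N \colon X_k = v\}$. Equivalently, $\widetilde{T}_N$ is the collection of last-exit edges from each non-root vertex. Since time-reversal preserves the law of the stationary walk on a reversible chain and turns last-exit edges into first-entry edges, it suffices to prove that $\widetilde{T}_N$ is uniform over rooted spanning trees of $G$.

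Second, I would show that $(\widetilde{T}_N)_{N \in \mathbb{Z}}$ is itself a stationary Markov chain under an explicit local update rule: from $\widetilde{T}_N$ rooted at $r = X_N$, if $X_{N+1} = u$, then $\widetilde{T}_{N+1}$ is obtained by deleting the edge from $u$ to its current parent (if $u \ne r$) and inserting the edge $(u,r)$, with the root moved to $u$. The uniform distribution on rooted spanning trees of $G$ is invariant under this rule: for each target $(T',u)$, the predecessors $(T,r)$ transitioning to $(T',u)$ are indexed by the neighbours of $u$ in $G$, and the transition probabilities match by a direct bijective count. Stationarity of the walk then forces the marginal law of $\widetilde{T}_N$ to be this uniform distribution.

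Finally, I would transfer back to the Aldous-Broder construction starting at a fixed vertex $v_1$. Conditioning the bi-infinite walk on the event that $v_1$ is first hit at some time $M$ and every other vertex is then hit at some later time $M + T$ yields precisely the joint law of the original construction; the resulting tree is $\widetilde{T}_{M+T}$ under time-reversal, hence uniformly distributed over spanning trees. The main obstacle is the invariance calculation in the second step, which needs a careful bijection between the in-structure and out-structure at the updated vertex; once this is in place, the remaining steps are routine bookkeeping.
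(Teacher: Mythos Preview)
The paper does not give a proof of this lemma; it is quoted as a result of Aldous and Broder and used as a black box, so there is no argument in the paper to compare against. Your outline is essentially the classical proof from those references and is sound.

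One caveat worth noting: your claim that the \emph{uniform} distribution on rooted spanning trees is stationary for the tree chain holds only when $G$ is regular. For general $G$, the stationary measure assigns to $(T,r)$ mass proportional to $\prod_{v\neq r}\deg(v)^{-1}$, the product of the one-step transition probabilities along the directed edges of $T$; this weight depends on $r$ but not on $T$, so the conclusion you need --- uniformity of the tree conditional on the root --- still follows. You already flag the regular case as the one relevant for the application $G=K_n$, so this does not affect the paper's use of the lemma, but as stated the lemma covers arbitrary $G$, and your second step would need this adjustment to go through in that generality.
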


We also rely on a result due to Moon \cite{Mo1967}, appearing for example in {\cite[p. 205]{AiZi2010}}. A \emph{rooted forest} is a forest together with a choice of a root in each component. We can imagine a rooted forest as a directed graph, by orienting edges away from the root in each component. This defines a subgraph relation on the set of rooted forests on $n$ vertices.
\begin{lemma}[{\cite{AiZi2010}}]\label{lem:rooted_forest_count}
    Let $F_{n,m}$ be a rooted forest with vertex set $[n]$ having $m$ components. Then, the number of rooted trees with vertex set $[n]$ containing $F_{n,m}$ is exactly $n^{m-1}$.
\end{lemma}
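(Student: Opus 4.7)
The plan is to use Pitman's double-counting argument (as presented in Aigner and Ziegler's proof). I would count pairs $(T, \sigma)$ where $T$ is a rooted tree on $[n]$ containing $F_{n,m}$ as a rooted subforest, and $\sigma$ is a linear ordering of the $m-1$ edges of $T$ not in $F_{n,m}$ such that inserting them one at a time into $F_{n,m}$ in the order $\sigma$ always yields an intermediate rooted forest. Let $N$ denote the number of rooted trees on $[n]$ containing $F_{n,m}$, which is the quantity I wish to show equals $n^{m-1}$.

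First I would count these pairs by fixing $T$ and counting orderings. Since $T$ has $n-1$ edges and $F_{n,m}$ has $n-m$, there are exactly $m-1$ edges of $T$ outside $F_{n,m}$. Moreover, every linear ordering of these edges is valid, because removing any subset of edges from a rooted tree still leaves a rooted forest (and it contains $F_{n,m}$ by construction). Hence the total number of pairs is exactly $N \cdot (m-1)!$.

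Second, I would count the pairs constructively, building up an intermediate rooted forest from $F_{n,m}$ by adding one directed edge at a time. At step $i$, the current rooted forest has $m - i + 1$ components. To extend it to a rooted forest with one fewer component, I must pick a root $v$ of some component (which will acquire a parent) and then pick any vertex $u$ in a different component to serve as that parent. Summing over the choices of $v$,
\[
\sum_{v \text{ a root}} \bigl(n - |C(v)|\bigr) \;=\; (m-i+1)\,n - n \;=\; n(m-i),
\]
using that the component sizes $|C(v)|$ partition $[n]$. Multiplying over $i = 1, \dots, m-1$ gives a total of $n^{m-1} (m-1)!$ pairs, and equating with the first count yields $N = n^{m-1}$.

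The pleasant feature to exploit is the telescoping: the count at each step depends only on the \emph{number} of components and not on their sizes, which is what makes the product collapse to a clean $n^{m-1}$. The only subtlety requiring care is the bijection between constructive build-up sequences and pairs $(T, \sigma)$: I must verify that every prescribed edge addition indeed produces a valid rooted forest (direct from the rule that the child endpoint of the new edge is a former root) and that conversely every pair $(T, \sigma)$ arises in this way (which follows because in a rooted tree the unique directed edge into any non-root vertex determines parent from child). No substantial obstacle is anticipated beyond this bookkeeping.
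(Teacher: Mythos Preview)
The paper does not give its own proof of this lemma; it merely cites the result from Moon via Aigner--Ziegler. Your argument is correct and is precisely Pitman's double-counting proof as it appears in the cited source, so there is nothing to compare.
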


Using the Aldous-Broder algorithm and \Cref{lem:rooted_forest_count} we can derive the following two key lemmas about the edge distribution of a random spanning tree. 

The first governs the probability of a \emph{wasted} round, in which Builder is offered no edges from a predefined set of \emph{relevant} edges.

\begin{lemma}\label{lem:failurebound:strat1}
Let $\usefulE{} \subseteq \binom{[n]}{2}$ be a set of edges and $T$ a tree on vertex set $[n]$ sampled uniformly at random from all spanning trees of $K_n$. Then,
\begin{align*}
    \prob{S \cap E(T) = \emptyset} \leq \exp\left(- \, \frac{|\usefulE{}|}{n}\right).
\end{align*}
\end{lemma}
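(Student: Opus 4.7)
My plan is to rewrite $\prob{S \cap E(T) = \emptyset}$ as a ratio of spanning-tree counts, apply the matrix-tree theorem, and bound the resulting product via a spectral decomposition of the Laplacian. First, let $\tau(G)$ denote the number of spanning trees of $G$; then $\prob{S \cap E(T) = \emptyset} = \tau(K_n - S)/\tau(K_n)$. Moon's formula (\Cref{lem:rooted_forest_count}), applied to the empty forest (all $n$ singleton components), recovers $\tau(K_n) = n^{n-2}$, while the matrix-tree theorem yields $\tau(K_n - S) = n^{-1}\prod_{i=2}^n \mu_i$, where $0 = \mu_1 \leq \mu_2 \leq \ldots \leq \mu_n$ are the Laplacian eigenvalues of $K_n - S$.

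Second, I will diagonalise. Writing $L(K_n - S) = (nI - J) - L(S)$ and using that (i) both $L(S)$ and $L(K_n - S)$ have $\mathbf{1}$ in their kernel and (ii) $nI - J$ restricts to $nI$ on $\mathbf{1}^\perp$, one obtains $\mu_i = n - \lambda_i$ for $i \geq 2$, where $0 = \lambda_1 \leq \lambda_2 \leq \ldots \leq \lambda_n$ are the Laplacian eigenvalues of the graph $([n], S)$. These satisfy $\lambda_i \leq n$ (by positive semi-definiteness of $L(K_n - S)$) and $\sum_{i=2}^n \lambda_i = \operatorname{tr} L(S) = 2|S|$.

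Third, the elementary bound $1 - x \leq e^{-x}$ then gives
\[
\prob{S \cap E(T) = \emptyset} = \prod_{i=2}^n\left(1 - \frac{\lambda_i}{n}\right) \leq \exp\left(-\frac{2|S|}{n}\right) \leq \exp\left(-\frac{|S|}{n}\right).
\]

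The main obstacle is the spectral bookkeeping in the second step---cleanly identifying the $\mathbf{1}^\perp$ eigenspaces and confirming each factor $1 - \lambda_i/n$ lies in $[0,1]$. A fully combinatorial alternative that stays closer to the paper's Aldous--Broder + Moon toolkit would use Moon's formula to establish $\prob{F \subseteq E(T)} = \prod_j c_j / n^{|F|} \leq (2/n)^{|F|}$ for any forest $F$ with component sizes $c_j$ (using the integer inequality $c \leq 2^{c-1}$ together with $\sum_j(c_j - 1) = |F|$), and then invoke the Feder--Mihail negative association of uniform spanning-tree edge indicators together with a Chernoff lower-tail bound applied to $|S \cap E(T)|$ (whose mean is $2|S|/n$) to reach the same conclusion $\exp(-|S|/n)$.
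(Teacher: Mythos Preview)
Your spectral argument is correct and genuinely different from the paper's proof. The paper samples $T$ via the Aldous--Broder algorithm, conditions on the order $v_1,\ldots,v_n$ in which vertices are first visited, observes that (given the order) each $v_i$ independently chooses its parent with probability at least $1/n$ for every earlier vertex, orients each edge of $S$ toward its earlier endpoint, and multiplies out $\prod_i(1-d_S^+(v_i)/n)\le\exp(-|S|/n)$. Your route via the matrix-tree theorem and the identity $L(K_n-S)=nI-L(S)$ on $\mathbf{1}^\perp$ is cleaner in one respect: it yields the sharper bound $\exp(-2|S|/n)$, since $\sum_{i\ge 2}\lambda_i=\operatorname{tr}L(S)=2|S|$, which you then discard. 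The only point to state a touch more carefully is the eigenvalue pairing: $L(K_n)$ acts as $nI$ on $\mathbf{1}^\perp$, so $L(K_n-S)$ and $L(S)$ are simultaneously diagonalised there by any eigenbasis of $L(S)$ restricted to $\mathbf{1}^\perp$; the ordering of the $\mu_i$ versus the $\lambda_i$ is irrelevant because only the multiset (hence the product) matters. Your alternative via negative association would also work but brings in heavier machinery than either the paper's Aldous--Broder argument or your spectral one; I would drop it.
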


\begin{proof} 
We may assume by \Cref{lem:aldousbroder} 
that $T$ is sampled according to the Aldous-Broder algorithm run on $K_n$ starting from $v_1:=1$.
Suppose the vertices are added to the tree $T$ in the order $v_1,v_2,\ldots, v_n$.

Direct each edge in $\set{v_i, v_j} \in S$ backwards according to this order, i.e., 
directed from $v_i$ to $v_j$, as $(v_i,v_j)$, if $j<i$.
Denote by $N_S^+(v_i)$ the set of edges out of $v_i$,
with $d_S^+(v_i) = \card{N_S^+(v_i)}$ its outdegree;
note that $\sum_i d_S^+(v_i) = \card S$ since each edge gets directed.

We claim that, conditioned on the order, independently for each pair $0<j<i\leq n$, 
$\prob{\parent{(v_i)}=v_j} = f(i,j) \geq 1/n$, for some fixed formula~$f$.
Consider the step after $v_\im$ has been added to the tree. 
This step walks to a random vertex $w$.
It is convenient to imagine that the $K_n$ contains self-loops, so that there are $n$ possible outcomes for $w$ including $w=v_\im$; this makes no difference to the Aldous-Broder algorithm's outcome.
There are two cases. 
The first is that $w$ is a new vertex, which thus becomes $v_i$, so $\parent(v_i)=v_\im$;
this occurs with probability $(n-(\im))/n$.
The second is that $w$ is a revisit, equally likely to be any of $v_1$ through $v_\im$; this occurs with probability $(\im)/n$.
In the second case case, by symmetry, any of $v_1$ through $v_\im$ is equally likely to become $\parent(v_i)$, giving probability mass $\frac{\im}{n} \cdot \frac1\im = \frac1n$ to each $v_j$ with $j<i$.
Vertex $v_\im$ has additional probability mass from the first case (and it is easy to see that the probabilities add to 1 as they must).
This defines $f(i,j)$ and establishes the claim. 

Any directed edge in $N_S^+(v_i)$ goes to a lower-index endpoint $v_j$,
and by the claim, $\prob{\parent(v_i)=v_j} \geq 1/n$.
For a given $v_i$ these events are disjoint, 
so $\prob{ (v_i, \parent(v_i)) \in S } \geq d_S^+(v_i) /n$. 
By the claim, these events are independent for distinct $v_i$, so
\begin{align*}
\prob{S \cap E(T) = \emptyset} 
  &\leq \prod_i \parens{1- \frac{d_S^+(v_i)}{n}}
  \leq \prod_i \expp{-\frac{d_S^+(v_i)}{n}}
  = \expp{- \sum_i \frac{d_S^+(v_i)}{n}}
  = \expp{-\frac{\card S}{n}} .
\end{align*}
\end{proof}

In one round, by symmetry (see \eqref{oneEdge}),
the probability that Builder is \emph{offered} any given edge is $\frac{2}{n}$,
but this does not mean that Builder will \emph{claim} this edge, since several edges of interest may have been offered. 
The next lemma shows that whenever Builder is interested in a set $S$ of ``not too many'' edges, 
each of those edges is claimed
with probability at least $\frac{1}{n}$.

\begin{lemma}\label{lem:failurebound:strat2}
Let $S\subseteq \binom{[n]}{2}$ be a set of edges satisfying $\card S \leq \frac{n}{4}$, let $e \in S$ and let $T$ be a tree with vertex set $[n]$ sampled uniformly at random  from all spanning trees of $K_n$.
Then, 
\begin{align*}
  \prob{S \cap E(T) = \{ e \} } \geq \frac{1}{n}.
\end{align*}
\end{lemma}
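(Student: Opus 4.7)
The plan is to write
\[
  \prob{S \cap E(T) = \{e\}}
   = \prob{e \in T} - \prob{e \in T \text{ and } (S \setminus \{e\}) \cap E(T) \neq \emptyset},
\]
and to control the second term by a union bound
\[
  \prob{e \in T,\ \exists\, e' \in S \setminus \{e\}: e' \in T}
    \;\leq\; \sum_{e' \in S \setminus \{e\}} \prob{\{e,e'\} \subseteq E(T)}.
\]
It then suffices to prove two pointwise estimates: $\prob{e \in T} = 2/n$ and $\prob{\{e_1,e_2\} \subseteq E(T)} \leq 4/n^2$ for any two distinct edges $e_1, e_2$ of $K_n$. Combined with $|S| \leq n/4$, these give
\[
  \prob{S \cap E(T) = \{e\}} \;\geq\; \frac{2}{n} - (|S|-1) \cdot \frac{4}{n^2} \;\geq\; \frac{2}{n} - \frac{1}{n} \;=\; \frac{1}{n}.
\]

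The single-edge bound is immediate by symmetry: since every edge of $K_n$ is equally likely to lie in $T$ and $|E(T)| = n-1$, linearity of expectation gives $\prob{e \in T} = (n-1)/\binom{n}{2} = 2/n$. The pair bound is the only nontrivial ingredient, and I would derive it from \Cref{lem:rooted_forest_count} applied to the spanning forest $F'$ of $K_n$ whose only non-trivial component is the two-edge subgraph spanned by $e_1$ and $e_2$, the remaining vertices being singletons. This $F'$ has $c = n-2$ components; summing Moon's count $n^{c-1} = n^{n-3}$ over all rootings of $F'$ (i.e., over all ways to choose a root within each component) and dividing by $n$ (to pass from rooted trees on $[n]$ to unrooted ones, each of which is counted exactly once since ``containing $F'$'' is root-independent for unrooted trees) yields exactly $\bigl(\prod_i n_i\bigr)\, n^{c-2}$ unrooted spanning trees containing $F'$, where $n_i$ are the component sizes. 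The product $\prod_i n_i$ equals $2 \cdot 2 = 4$ when $e_1, e_2$ are vertex-disjoint and $3$ when they share a vertex, so in both cases the probability is at most $4 n^{n-4}/n^{n-2} = 4/n^2$.

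The only place requiring a small amount of care is the reduction from the rooted count in \Cref{lem:rooted_forest_count} to the unrooted count one actually wants; the two cases (disjoint edges and edges sharing a vertex) give different constants, and it is essentially just arithmetic that the maximum of these constants, $4$, combines with $|S| \leq n/4$ to yield exactly the $1/n$ bound. Other than this bookkeeping, the proof is a routine first-moment/union-bound calculation, and no probabilistic subtlety (e.g.\ concentration or negative association) needs to be invoked.
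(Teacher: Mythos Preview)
Your proposal is correct and follows essentially the same approach as the paper: both compute $\prob{e\in T}=2/n$ by symmetry, bound $\prob{\{e,e'\}\subseteq T}\le 4/n^2$ via Moon's rooted-forest count (\Cref{lem:rooted_forest_count}) with the two cases $\prod_i n_i\in\{3,4\}$, and finish by the same union bound $\tfrac{2}{n}-(|S|-1)\tfrac{4}{n^2}\ge\tfrac{1}{n}$. Your passage from the rooted count to the unrooted count is slightly more explicit than the paper's, but the argument is the same.
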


\begin{proof}
Since each spanning tree of $K_n$ contains $n-1$ edges, by symmetry it is clear that for each $e \in S$,
\begin{align}
  \prob{e \in E(T)}
   &= \frac{n-1}{\binom n 2}
   = \frac 2 n . \label{oneEdge}
\end{align}

Then it suffices to show that the probability that $T$ contains any fixed pair of edges is at most $\frac{4}{n^2}$, for if this holds, then for any $e\in S$ we have 
\begin{align*}
    \prob{S \cap E(T) = \{e\}} \geq \prob{e \in E(T)} - \sum_{f \in S \setminus\{ e\}} \prob{\{e,f\} \subseteq E(T)}  \geq \frac{2}{n}- (|S|-1)\frac{4}{n^2}  \geq \frac{1}{n}.
\end{align*}

Now, let $e,f \in S$ be an arbitrary pair of edges and assume first that they are adjacent. If we view this pair of edges as a forest (with $n-3$ further isolated vertices) on $[n]$, then there are precisely $3$ different rooted forests $F_1,F_2,F_3$ on the edge set $\set{e,f}$, since the roots of isolated vertices are fixed (recall that a rooted forest is a forest with a choice of a root in each component). By \Cref{lem:rooted_forest_count}, for each $F_i$ there are $n^{n-5}$ rooted trees which contain $F_i$. Since each tree can be rooted in $n$ different ways, it follows that the total number of spanning trees containing $\{e,f\}$ is $3n^{n-4}$.
If $e$ and $f$ are not adjacent, then there are four possible rooted forests with edge set $\{e,f\}$, and hence by the same argument there are $4n^{n-4}$ spanning trees containing $\{e,f\}$. 

In both cases, by Cayley's formula, the probability that $\set{e,f}$ is contained in a uniformly chosen spanning tree is at most $\frac{4n^{n-4}}{n^{n-2}} = \frac{4}{n^2}$, as claimed.
\end{proof} 

\subsection{Proof of Theorem \ref{thm:main:tree}}

We follow a strategy with the same three phases as that for the semirandom star.
In the \textsc{Greedy} phase, there is now no need to consider directed edges, and the two subphases described in \Cref{sec:greedy} can be combined into one, in which in each round Builder attempts to claim a failed edge.

\begin{lemma}
After $\Poi{(\frac12 \D + 3 \D^{0.9}) n}$ rounds of a greedy strategy, whp the number of failed vertices is at most $2 \exnd{0.9} n$ and the number of failure hubs at most $\frac12 \exnd{1.07} n$.
\end{lemma}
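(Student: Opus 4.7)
The plan is to adapt the greedy analysis from the star model (\Cref{lem:greedy_analysis,lem:extragreedy_analysis}) to the tree setting. Since each offer is now an entire spanning tree, the inedge/outedge split becomes unnecessary: the greedy strategy is simply to claim, in each round, any missing $H$-edge that appears in the offered tree $T_t$ (and to claim nothing otherwise).

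For the per-edge failure bound I would split the analysis at the hitting time $R_\ast := \inf\{t : |S_t| \leq n/4\}$, where $S_t$ denotes the set of missing $H$-edges before round $t$. For $t < R_\ast$, \Cref{lem:failurebound:strat1} bounds the probability that round $t$ is wasted (i.e.\ $T_t \cap S_t = \emptyset$) by $e^{-|S_t|/n}$; summing geometrically over the decreasing trajectory of $|S_t|$ shows that the expected waste before $R_\ast$ is $\bigo{n}$, and a standard martingale Chernoff argument gives $R_\ast \leq \tfrac12 \D n + \D^{0.9} n$ whp, leaving at least $2\D^{0.9} n$ further rounds in the regime $|S_t| \leq n/4$. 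In those rounds, \Cref{lem:failurebound:strat2} implies that any specific $e \in S_t$ is claimed with probability at least $1/n$ in each round, so $e$ survives all of them with probability at most $(1-1/n)^{2\D^{0.9} n} \leq e^{-2\D^{0.9}}$. A union bound over the $\leq \D$ edges at $v$ then yields $\prob{v \text{ failed}} \leq \D e^{-2\D^{0.9}} \leq \exnd{0.9}$, and mimicking the hub calculation in \Cref{lem:extragreedy_analysis} (bounding the chance that at least $\D^{0.1}$ neighbours of $u$ simultaneously have some unclaimed incident edge) gives $\prob{u \text{ is a hub}} \leq \exnd{1.08}$.

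The main obstacle is converting these per-vertex bounds into whp statements about the totals. The star-model proof uses \Cref{lemma:concentration}, which exploits the per-vertex independence of offers encapsulated in \eqref{e:Poissonisation}; this breaks down here, since a single offered tree is a globally correlated random object and so the failure events for vertices at large pairwise distance in $H$ are no longer independent. My plan is to replace \Cref{lemma:concentration} with a Doob martingale $M_t := \expec{F \mid T_1,\ldots,T_t}$ (and analogously for the hub count) and apply Azuma's inequality. The key technical step is a bounded-differences estimate: resampling a single tree $T_t$ changes Builder's immediate claim by at most one edge, and I would need to track how this discrepancy in the useful set propagates through Builder's subsequent greedy decisions to control $|M_t - M_{t-1}|$, ideally by $\poly(\D)$, possibly only in a ``typical'' sense (via a Warnke-style refinement of bounded differences) since a naive worst-case cascade argument seems too loose. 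Given such an estimate, Azuma would yield both $F \leq 2\exnd{0.9} n$ and $\#\text{hubs} \leq \tfrac12 \exnd{1.07} n$ whp.
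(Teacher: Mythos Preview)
Your per-edge survival estimate (once $|S_t|\le n/4$, each fixed missing edge survives the remaining $2\D^{0.9}n$ rounds with probability at most $e^{-2\D^{0.9}}$) is correct, and the hitting-time bound for $R_\ast$ can be made rigorous. But there are two genuine gaps, both stemming from one missing idea. First, the hub-probability bound does \emph{not} follow by ``mimicking'' \Cref{lem:extragreedy_analysis}: that calculation uses the mutual independence of the per-vertex failure events supplied by \eqref{e:Poissonisation}. In your greedy tree process the survival events for distinct edges in $N^2[u]$ are coupled through Builder's choices (when a tree meets several missing edges, Builder claims only one), and you have not shown these survivals are independent or even negatively correlated, so you cannot Chernoff-bound the probability that $\ge \D^{0.1}/2$ of them survive simultaneously. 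Second --- and you rightly flag this as the main obstacle --- the Doob-martingale concentration step is incomplete: the Lipschitz bound on how resampling one tree cascades through later greedy choices to affect the final failed-vertex and hub counts is not proved, and it is not at all clear that a typical-bounded-differences refinement would close it.

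The paper sidesteps both issues with a single device. Once $|S|\le n/6$, rather than continue greedily, \emph{freeze} $S$ and in each remaining round accept an edge $e$ only when $S\cap E(T)=\{e\}$, discarding rounds in which the offered tree meets $S$ in two or more places and never updating $S$. Then the event ``$e$ is accepted in round $t$'' depends only on the fixed set $S$ and on $T_t$; since at most one edge is accepted per round and trees across rounds are independent, the acceptance counts $(X_e)_{e\in S}$ over $\Poi{\tau n}$ rounds are \emph{mutually independent} Poissons with mean at least $\tau$ (by \Cref{lem:failurebound:strat2} and Poisson thinning). Edge failures are thus independent Bernoullis: the hub bound becomes a clean Chernoff over independent edge failures in $N^2[u]$, the failed-edge count is binomial, and the whp statements follow directly from \Cref{lemma:concentration} (hub events for vertices at distance $\ge 5$ now depend on disjoint, hence independent, edge sets). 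Throwing away the doubly-hit rounds costs essentially nothing and buys exact independence, eliminating any need for a martingale argument.
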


\begin{proof}
The first $\Poi{(\frac12 \D + 2 \D^{0.99}) n}$ rounds whp include
$(\frac12 \D +  \D^{0.99}) n$ rounds,
and we analyse the greedy strategy in three subphases, of durations
${\frac12 \D n}$, ${\D^{0.99} n}$, and $\Poi{\D^{0.99} n}$.

We first claim that, after the first subphase, whp the number of failed edges is less than $\ell_1 := \D^{1/2} n$.
If not, by \Cref{lem:failurebound:strat1}, 
the number $X$ of wasted rounds (and thus edges failed at the end) is at most 
$\Bin{\frac12 \D n, \exnd{1/2}}$.
By the Chernoff bounds (\Cref{lem:chernoff}), 
$$\prob{X \geq \ell_!} 
 \leq \parens{\frac{e \cdot \frac12 \D \exnd{1/2} n}{\frac12 \D^{1/2} n}}^{\frac12 \D^{1/2} n}
 < \parens{\frac12}^{\frac12 \D^{1/2} n} = o(1) . $$
So, whp $X < \ell_1$, contradicting the hypothesis that the number of failed edges remains at least $\ell_1$.

Similarly, suppose at the start of the second subphase there are fewer than $\D^{1/2} n$ failed edges, but after the second subphase there are more than $\ell_2:=n/6$ failed edges.
Then by \Cref{lem:failurebound:strat1}, in each round Builder is offered a failed edge with probability at least $1-\expp{-\ell_2/n} \geq 1-\expp{-1/6} > 1/10$, and hence the total number of rounds in this subphase in which Builder claims a failed edge is at least $\Bin{\D^{0.99}n,1/10}$. 
Whp this exceeds $\D^{1/2}n$, contradicting the assumption that there were fewer failed edges at the start of this subphase.

Then the third subphase starts with a set $S$ of failed edges, with $\card S \leq \ell_2 = n/6$.
By \Cref{lem:failurebound:strat2}, in each round, for any $e \in S$, with probability at least $1/n$, the tree $T$ offered intersects $S$ uniquely in~$e$.  Only accept an edge in such rounds and do not update $S$. Should an accepted edge be the unique one again later, ``accept'' it again. 

Since at most one edge can be accepted in each round, by a similar argument as in \Cref{s:Poissonisation}, if we let $X_e$ be the number of times that an edge $e \in S$ is accepted during this third subphase, we have that the random vector $(X_e \colon e \in S)$ is distributed as a mutually independent collection of Poisson random variables, each of whose mean is at least $\D^{0.99}$, and so each edge fails to be accepted with probability at most $\exnd{0.99}$.

\begin{sloppypar}
In particular, the number of edge failures at the end of this subphase is dominated by $\Bin{n/6, \exnd{0.99}}$, and by Chernoff is whp at most $\exnd{0.99} n$.
\end{sloppypar}

For a vertex $v$ to be a hub after the third subphase means that $\D^{0.1}$ vertices in $N(v)$ are failed, and thus at least $\frac12 \D^{0.1}$ edges in $G[N^2(v)]$ are failed.
As just argued, any such edge failed at the start of the subphase remains failed with probability at most $\exnd{0.99}$, and edge failures are mutually independent, so the probability that a given $v$ is a hub is at most 
$$
\prob{\Bin{\D^2, \exnd{0.99}} \geq \D^{0.1}}
 \leq \parens{\frac{e \D^2 \exnd{0.99}}{\frac12 \D^{0.1}} }^{\D^{0.1}}
 \leq \parens{e^{-\D^{0.98}}}^{\D^{0.1}}
 = \exnd{1.08} .
$$
Hence, the expected number of hubs is at most $\exnd{1.08} n$.
Furhermore, since edge failures are mutually independent, and ``hub-ness'' of vertices at distance five or more depends on failures in disjoint edge sets, by \Cref{lemma:concentration} the number of hubs is whp at most $\exnd{1.07} n$.
\end{proof}

In the \textsc{Bridging} phase here,
the preprocessing stage needed for the semirandom star process is not necessary, due to the independence of offered edges in the tree model. As we will see below, it is sufficient that (as for the star process) we exclude the hubs and so can reduce the number of partition classes and increase the number of rounds spent on each class. 

In both the bridging and final phases, the set of relevant edges is given as $S := \bigcup_{v \in I}\bigcup_{v' \in C(v)} R(v, v')$, where $I \subseteq F$ is an independent set, $\CC$ is a candidate list, and the set of swapping candidates $C(v) \in \CC$ and the edges $R(v,v')$ necessary for a swap are defined as in \Cref{sec:substitutions}. We observe that for all $v \in I$  and for all $v' \in C(v)$ it holds that $|R(v,v')| \leq 2\D$ and $\CC$ can be greedily chosen such that $|C(v)| = \frac{n}{(\D^2+1)|I|}$, 
implying that 
\begin{align}
 \card S
 &=
  \sum_{v \in I} \sum_{v' \in C(v)} |R(v, v')| 
  \leq |I| \cdot \frac{n}{(\D^2+1)|I|} \cdot 2\D  
  \leq \frac{n}{4}.  \label{TreeSmallRelevant}
\end{align}
In these subphases, as in the third subphase of the tree-process greedy phase, in a round where a tree $T$ is offered, we will  accept an edge $e$ if and only if $S \cap E(T) = \set e$, and we do not update $S$ (except of course when we switch to the next independent set $I$).
By \eqref{TreeSmallRelevant} and \Cref{lem:failurebound:strat2}, in each round, Builder claims a given $e \in S$ with probability at least $\frac{1}{n}$ and so, by a similar argument as before the probability that an edge $e \in S$ is claimed after $\Poi{\tau n}$ is at least $1- \expp{-\tau}$ and these events are mutually independent for different edges. Therefore, given a failed vertex $v$ and a swapping candidate $v' \in C(v)$, the probability that $v'$ can repair $v$ after $\tau$ rounds is at least
\begin{align}\label{e:newprob}
  \prob{R(v,v') \subseteq G} \geq (1-\expp{-\tau})^{2\D},
\end{align}  
and by extension,
\begin{align}\label{e:repairprob}
 \prob{v \text{ is not repairable}} \leq (1-(1-\expp{-\tau})^{2\D})^{|C(v)|}.
\end{align}

In the \textsc{Bridging} phase, the strategy is this.
Partition the non-hub failed vertices into $\D^{0.1}$ independent sets of equal size,
for each independent set create a candidate list of \length $\D^{0.6}$,
and spend time $\Poi{\tau n}$, with $\tau=\D^{0.8}$, on repairing each independent set, for time $\Poi{\D^{0.9} n}$ in all.
(For the star process we also avoided repairing hub-adjacent vertices, to avoid overloading their hub neighbours with relevant edges; consider vertex $w$ in \Cref{fig:swaps}.
In the tree process all that matters is that the gadgets' edges are distinct; it is irrelevant if many such edges have a common vertex.)

\begin{lemma}\label{lem:bridging_analysis_tree}
Let $G$ be a graph containing at most $2 \exnd{{0.9}} n $ failures and at most $\frac12 \exnd{1.07} n$ hubs.
If Builder follows the strategy described, then whp afterwards there are at most  $\exnd{{1.06}} n$ failures (including hubs).
\end{lemma}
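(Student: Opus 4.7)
The plan is to bound the per-vertex non-repair probability in each batch via \eqref{e:repairprob}, apply \Cref{lemma:concentration} within each batch, union-bound over batches, and finally add in the hubs.

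Fix a batch $I_i$ with candidate list $\CC_i$ of \length $s = \D^{0.6}$, running for $\Poi{\D^{0.8} n}$ rounds. For each non-hub failed vertex $v \in I_i$, substituting $\tau = \D^{0.8}$ and $|C(v)| = \D^{0.6}$ into \eqref{e:repairprob}, I would estimate
\[
  \prob{v\text{ is not repairable in batch }i} \leq \bigl(1-(1-\expp{-\D^{0.8}})^{2\D}\bigr)^{\D^{0.6}} \leq \bigl(2\D \expp{-\D^{0.8}}\bigr)^{\D^{0.6}} \leq \exnd{1.3},
\]
using Bernoulli's inequality $(1-x)^n \geq 1 - nx$ (applied to $x = \expp{-\D^{0.8}}$) and observing that $\D^{0.6}\ln(2\D) = \smallo{\D^{1.4}}$ for $\D$ sufficiently large.

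Next, I would verify the hypothesis of \Cref{lemma:concentration}. By \Cref{d:candidatelists}, the claim events that determine whether $v$ is repairable involve only edges within some constant $H$-distance of $v$ (at most $3$). Combined with the assertion preceding \eqref{e:newprob} that the claim events for distinct edges during the Poissonised tree process are mutually independent, this implies that for any subset of $I_i$ at pairwise $H$-distance at least some absolute constant $d$, the corresponding non-repair events are mutually independent. Applying \Cref{lemma:concentration} to the events $\{A_v\}_{v \in V(H)}$, where $A_v$ is the event that $v \in I_i$ and $v$ is not repairable, with $p = 1.3$ and $p' = 1.2$, yields that whp at most $\exnd{1.2} n$ vertices in $I_i$ fail to be repaired during batch~$i$.

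Union-bounding over the $\D^{0.1}$ batches (a polynomial-in-$\D$ factor, still $\smallo{1}$ as $n \to \infty$), whp the total number of unrepaired non-hub failed vertices is at most $\D^{0.1} \cdot \exnd{1.2} n \leq \tfrac12 \exnd{1.06} n$ for $\D$ sufficiently large. Adding the at most $\tfrac12 \exnd{1.07} n \leq \tfrac12 \exnd{1.06} n$ hubs carried over from the hypothesis, the total failure count after the bridging phase is whp at most $\exnd{1.06} n$, as claimed. The main obstacle is the careful justification of the mutual independence of the claim events for distinct edges in the Poissonised tree process (as indicated just before~\eqref{e:newprob}); with that in hand, the rest is routine estimation together with a union bound.
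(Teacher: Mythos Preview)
Your argument follows the same approach as the paper's: bound the per-vertex non-repair probability via \eqref{e:repairprob}, invoke \Cref{lemma:concentration}, then add back the hubs. The paper applies \Cref{lemma:concentration} once across all non-hub failures rather than per batch followed by a union bound over the $\D^{0.1}$ batches, but your variant is also valid and yields the same conclusion.

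One step of your justification is incorrect, though easily repaired. You assert that the edges determining whether $v$ is repairable lie within $H$-distance at most $3$ of $v$; this is false, since a candidate $v' \in C(v)$ may lie anywhere in $H$, so the gadget edges $v'w$ (with $w \in N_H(v)$) and $vw'$ (with $w' \in N_H(v')$) are not confined to any bounded-radius ball around $v$. The correct reason the independence hypothesis of \Cref{lemma:concentration} is satisfied is that properties \ref{i:cand1}--\ref{i:list:dist_three} of \Cref{d:candidatelists} force the edge sets $\bigcup_{v' \in C(v)} R(v,v')$ to be pairwise disjoint as $v$ ranges over $I_i$; combined with the mutual independence of claim events for distinct edges in the Poissonised tree process (stated just before \eqref{e:newprob}), this makes the non-repair events mutually independent for \emph{all} $v \in I_i$, not just distant ones. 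Hence the hypothesis of \Cref{lemma:concentration} holds trivially for any $d$, and the rest of your argument goes through unchanged.
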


\begin{proof}
Note that
\begin{align*}
   (1-\expp{-\tau})^{2\D} \geq 1 - 2\D \exnd{{0.8}} \geq 1 - \exnd{{0.7}} .
\end{align*}
Hence, \eqref{e:repairprob} gives that
\begin{align*}
\prob{v \text{ is not repairable}} 
 \leq \parens{\exnd{{0.7}}}^{\D^{0.6}} 
 = \exnd{{1.3}} .
\end{align*}
By \Cref{lemma:concentration}, whp the total number of such failures is at most $\exnd{1.2} n$. 
Adding to this the number of hubs, the total number of failed vertices is at most $\exnd{1.06} n$.
\end{proof}

In the \textsc{Final} phase, $\tau = o_{\D}(1)$, and hence by \eqref{e:newprob}
\begin{align*}
  \prob{R(v,v') \subseteq G} 
    \geq (1-\expp{-\tau})^{2\D} 
    \geq (\tau/2)^{2\D} ,
\end{align*} 
which is larger than in the semirandom star process (compare with \eqref{eq1:lem:final_phase}). 
Choosing the \length of the candidate lists here to match what was done for the star process, the equivalent of \Cref{lem:analysis:finalphase} therefore holds here too, 
and so do the equivalents for the rest of the final phase, 
\Cref{claim:final_phase:swaps,lem:finalTotalTime}.

Finally, the equivalent of \Cref{sec:proof_main} assembles the results from these three phases to establish \Cref{thm:main:tree}.

\section{Discussion}\label{sec:Discussion}
In \Cref{thm:main:star,thm:main:tree} we showed that in both the semirandom star process and the semirandom tree process, Builder can construct a copy of any $n$-vertex graph $H$ with maximum degree $\D=\D(n)$ in time asymptotically $\frac12 \D n$ (the number of edges potentially required),

In contrast, for other graph properties it is known that the two models behave differently.
In the semirandom tree model, properties including $k$-connectedness and Hamiltonicity can be achieved in time
asymptotically equal to the number of edges \cite{BuLi2023}, whereas in the semirandom star process this is impossible \cite{BeHeKrPaShSt2020}. 
Specifically, in the semirandom star process, \whp{} constructing a perfect matching requires at least $0.932 n$ rounds \cite{GaMaPr2022PM} and constructing a Hamilton cycle at least $1.26575 n$ rounds,
whereas in the semirandom tree process, whp a perfect matching can be constructed in $n/2 + o(n)$ rounds and a Hamiltonian cycle in $n + o(n)$ rounds \cite{BuLi2023}.

It would be interesting to know if, contrary to the above examples, there is any graph property which is \emph{easier} to achieve in the semirandom star process than in the semirandom tree process.
\begin{question}\label{q:comparetree}
Is there a graph property $\mathcal{P}$ 
that can be achieved whp in the semirandom star process
asymptotically faster than it can in the semirandom tree process?
\end{question}

What about semirandom processes in which Builder is offered the edge set of a different spanning structure, such as a random Hamiltonian path
or a random perfect matching?

Viewed in a more general context, as a semirandomised version of a ``picker-chooser'' type game (see, e.g., \cite{B02}), there is a natural generalisation of the semirandom tree process to arbitrary \emph{matroids}.
(See, e.g., \cite{O92} for an introduction to the theory of matroids.)
For example, one could consider the following semirandom process, which we call the \emph{semirandom basis process}, defined over the \emph{finite field model}.
For $q$ a prime power, in each round Builder is \emph{offered} a uniformly random basis $\{\bm{v}_1,\ldots, \bm{v}_n\}$ of the field $\mathbb{F}^n_q$, 
from which Builder may \emph{choose} one vector.

As with random graphs, threshold phenomena have been studied in the finite field model \cite{CG21,KLTY23}, which has been suggested as an interesting ``toy model'' for problems in additive number theory \cite{G05,P24,W15}. It would be interesting to know if there are strategies in the semirandom basis process which whp can build certain structures,  for example, arithmetic progressions or Schur triples, much quicker than the thresholds for their appearance in the corresponding ``classical'' random finite field process, where a sequence of elements $\bm{v_1},\bm{v}_2, \ldots \in \mathbb{F}_q^n \setminus \{ \bm{0}\}$ is generated by choosing each $\bm{v}_i$ uniformly at random from $\mathbb{F}_q^n \setminus \{ \bm{0},\bm{v}_1, \ldots, \bm{v}_\im\}$.

\begin{question}
Let $q$ be a prime power. Does Builder have a strategy in the semirandom basis process over $\mathbb{F}_q^n$ to construct
\begin{enumerate}[(a)]
\item a $3$-term arithmetic progression; or
\item a set $\{\bm{x},\bm{y},\bm{z}\}$ such that $\bm{x} + \bm{y} = \bm{z}$,
\end{enumerate}
in significantly fewer than $q^{\frac{n}{3}}$ rounds?
\end{question}

The question is of interest because it follows from work of Chen and Greenhill \cite{CG21} that in the classical random finite field process, whp both of these structures first appear when $\Theta\left(q^{\frac{n}{3}}\right)$ elements have been chosen.

\vspace{1cm}
\subsection*{Acknowledgements}
This work was initiated at the kick-off workshop of the international project ``Sparse random combinatorial structures'' (DFG CO 646/6-1 and FWF I6502). This research was funded in part by the Austrian Science Fund (FWF) [10.55776/P36131, 10.55776/I6502, 10.55776/ESP3863424], and by the European Union’s Horizon 2020 research and innovation programme under the Marie Sk\l{}odowska-Curie grant agreement No.\ 101034413 \includegraphics[width=4.5mm, height=3mm]{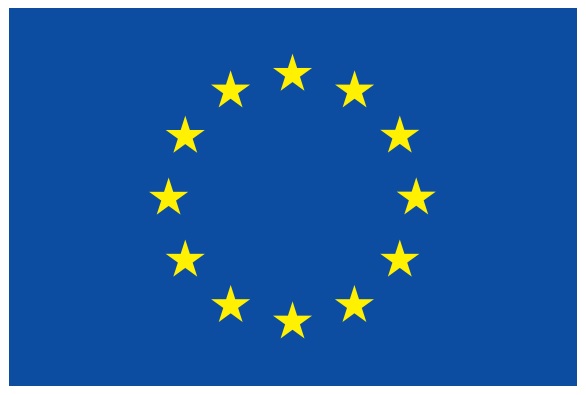}.
For open access purposes, the authors have applied a CC BY public copyright licence to any author accepted manuscript version arising from this submission.

\printbibliography
\end{document}